\newtheorem{thm}{Theorem}[section]
\newtheorem*{deft}{Definition}
\newtheorem{prop}{Proposition}[section]
\newtheorem{lem}{Lemma}[section]
\newtheorem{coro}{Corollary}[section]
\numberwithin{equation}{section}
\theoremstyle{remark}
{\vskip 0.5cm}
\newtheorem{rque}{\textbf{Remark}}[section]{\vskip 0.5cm}
{\vskip 0.5cm} 
\title{On the confinement of a tokamak plasma}
\author{Daniel Han-Kwan\footnote{Département de Mathématiques et Applications, Ecole Normale Supérieure, 45 rue d'Ulm, 75230 Paris Cedex 05, France (hankwan@dma.ens.fr)}}
\begin{document}
 \maketitle

\begin{abstract}
The goal of this paper is to understand from a mathematical point of view the magnetic confinement of plasmas for fusion.
Following Frénod and Sonnendrücker \cite{FS2}, we first use two-scale convergence tools to derive a gyrokinetic system for a plasma submitted to a large magnetic field with a slowly spatially varying intensity.  We formally derive from this system a simplified bi-temperature fluid system. We then investigate the behaviour of the plasma in such a regime and we prove nonlinear stability or instability depending on which side of the tokamak we are looking at. In our analysis, we will also point out that there exists a temperature gradient threshold beyond which one can expect stability, even in the ``bad'' side : this corresponds to the so-called H-mode.
\end{abstract}

\section{Introduction}

\subsection{Magnetic confinement for plasmas}

Fusion is undoubtly one of the most promising research fields in order to find new sources of energy. For the time being, magnetic confinement fusion represents one of the two main approaches (the other one being inertial confinement fusion). The principle consists basically in using a magnetic field in order to confine the very high temperature plasma.
Good confinement is absolutely compulsory since the plasma could otherwise damage the surrounding materials. 
\\
\begin{center}
\psfrag{$r$}{$r$}
\psfrag{0}{$0$}
\psfrag{$B$}{$B$}
\psfrag{a}{$e_\varphi$}
\psfrag{b}{$e_r$}
\psfrag{c}{$e_\theta$}
\psfrag{d}{$\theta$}

\includegraphics[scale=0.4]{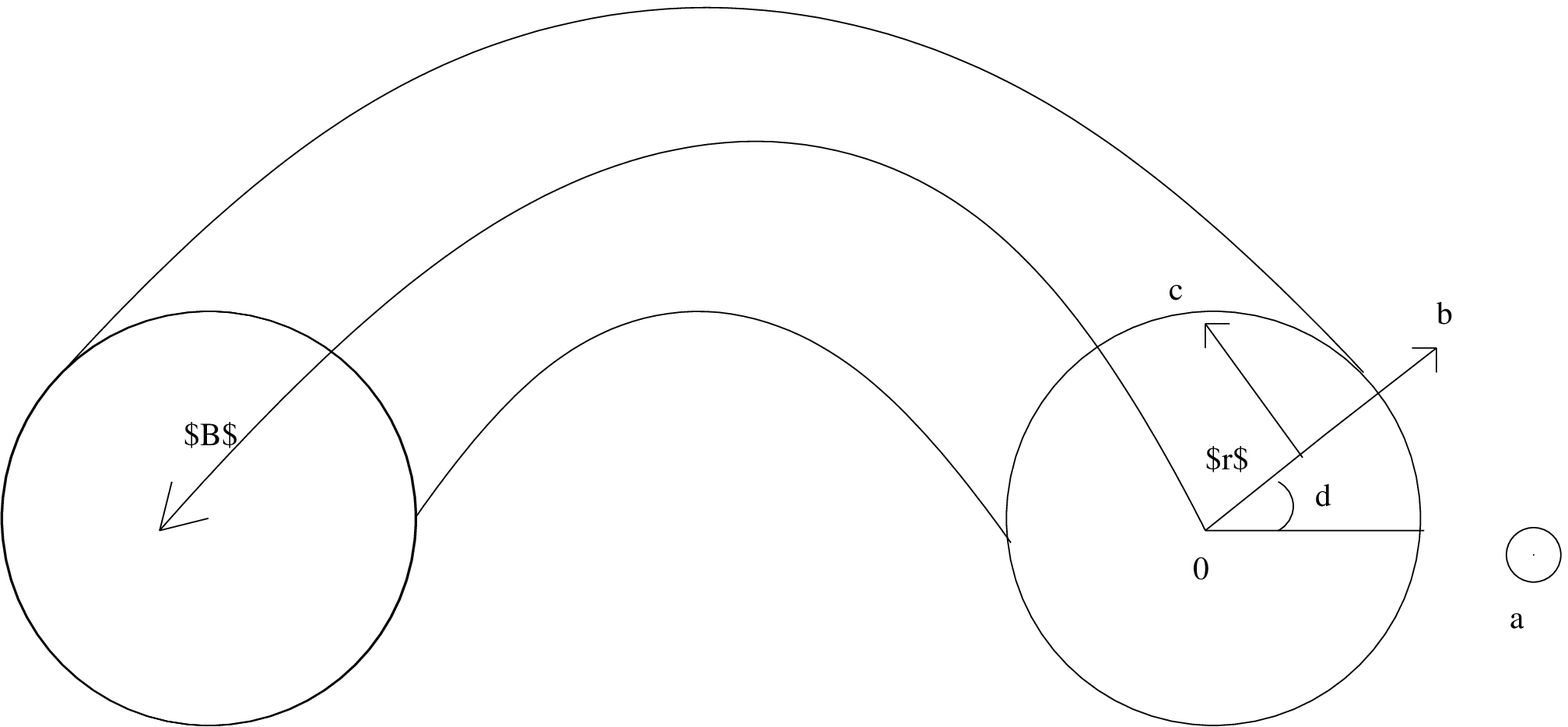}
\end{center}

A first step towards confinement is to use a tokamak\footnote{Actually there are other possibilities, like stellarators. These kinds of devices are much more difficult to study from the mathematical viewpoint, since they have a very complex structure.}, i.e. a torus-shaped box and consider a large purely toroidal magnetic field $B$, in other words $B=\frac{\mathcal{B}}{\epsilon} e_\varphi$ with $\epsilon >0$ small. One can formally show that at leading order in $\epsilon$, particles oscillate around the magnetic field lines. 
The drawback of this technique is that there are in fact many drifts appearing at higher order, some due to the geometry of $B$ and one we are specifically concerned with, which is called the electric drift or $E \times B$ drift:
 $$v_E=\frac{E\wedge B}{\vert B \vert^2}$$ 
 where $E$ denotes the electric field. 
 
 Since the electric field is induced by the plasma itself, one can not precisely predict its qualitative behaviour and thus this drift may prevent us from getting a good confinement property : if we wait long enough, particles may stop to perfectly turn around the torus and start drifting toward the edge of the tokamak. In order to overcome the effects of the electric drift, the idea is basically to take advantage of the other drifts due to the geometry of $B$.

In the present paper, we make the assumption that the ions of the plasma are at thermodynamic equilibrium and we describe the distribution of electrons by a kinetic equation. For the sake of simplicity, we restrict to the $2D$ problem in the plane orthogonal to $B$, in order to understand the behaviour of the particles in the slice. We take a magnetic field given by $$B = \frac{\mathcal{B}}{\epsilon} e_\varphi$$
with $\epsilon >0$ a small parameter and $\mathcal{B}$ to be fixed later.
We consider the Finite Larmor Radius scaling (see \cite{FS2} for a reference in the mathematical literature) which consists in considering a characteristic spatial length with the same order as the Larmor radius (which is of order $\epsilon$). This scaling allows for a better description of the orthogonal motion and is expected to make the electric drift appear in the limit $\epsilon\rightarrow 0$.
The density $f_\epsilon(t,x,v)$ (with $t>0, x \in \mathbb{T}^2, v \in \mathbb{R}^2$) of the electrons is then given by the following dimensionless Vlasov Poisson system :
\begin{equation}
  \left\{
 \begin{array}{ll}
\partial_t f_\epsilon + \frac{v}{\epsilon}.\nabla_x f_\epsilon + (E_\epsilon+ v^\perp \frac{\mathcal{B}}{\epsilon}).\nabla_v f_\epsilon = 0 \\
  f_{\epsilon,\vert t=0} = f_0  \\ 
  E_\epsilon= -\nabla_x V_\epsilon \\
    \displaystyle{-\Delta_x V_\epsilon = \int f_\epsilon dv-1}
 \end{array}
\right.
  \end{equation}

We denote $x=\begin{pmatrix}x_1 \\ x_2 \end{pmatrix}$, $v=\begin{pmatrix}v_1 \\ v_2 \end{pmatrix}$ in the local orthogonal basis (see figure \ref{tokamak}). For any vector $A=\begin{pmatrix}A_1 \\ A_2 \end{pmatrix} $, we denote  $A^\perp=\begin{pmatrix}A_2 \\ -A_1 \end{pmatrix}$.

\begin{figure}[h]
\psfrag{0}{$0$}
\psfrag{a}{$x_2$}
\psfrag{b}{$x_1$}
 \includegraphics[scale=0.6]{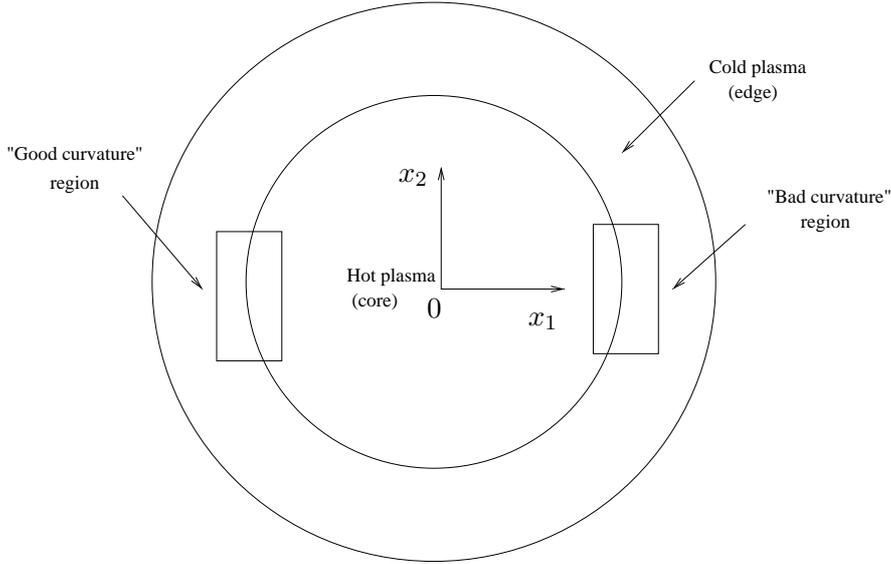}
\caption{A slice of tokamak}
\label{tokamak}
\end{figure}

Following Grandgirard et al. (\cite{Gra}), we consider the explicit formula for $\mathcal{B}$ :
\begin{equation}
\mathcal{B}=\frac{R_0}{R_0 + \epsilon r \cos \theta}= \frac{R_0}{R_0 + \epsilon x_1}
\end{equation}
denoting by $R_0$ the small radius of the torus. (We recall that the characteristic spatial length is of order $\epsilon$)

We consider that $R_0\sim 1$ ; consequently at first order in $\epsilon$ we get:
\begin{equation}
\mathcal{B}=1 - \epsilon x_1
\end{equation}
leading to the following system:
\begin{equation}
\label{larmor}
  \left\{
 \begin{array}{ll}
\partial_t f_\epsilon + \frac{v}{\epsilon}.\nabla_x f_\epsilon + (E_\epsilon+ \frac{v^\perp}{\epsilon} - x_1 v^\perp).\nabla_v f_\epsilon = 0 \\
  f_{\epsilon, \vert t=0} = f_0  \\ 
  E_\epsilon= -\nabla_x V_\epsilon \\
    \displaystyle{-\Delta_x V_\epsilon = \int f_\epsilon dv-1}
 \end{array}
\right.
  \end{equation}
We will see that taking an inhomogeneous intensity for the magnetic field, even at order $1$ in $\epsilon$, leads to a quite different behaviour for the plasma.

Indeed, in the limit $\epsilon\rightarrow 0$, we can derive rigorously  another kinetic system which is qualitatively close to the following one (see sections \ref{gyrolimit} and \ref{mod}):
\begin{equation}
\label{equ}
  \left\{
 \begin{array}{ll}
\partial_t f -\frac{1}{2}\vert v\vert^2\partial_{x_2}f + E^{\perp}.\nabla_x f = 0 \\
  f_{\vert t=0} = f_0  \\ 
  E= -\nabla_x V \\
    \displaystyle{-\Delta_x V = \int f dv-1}
 \end{array}
\right.
  \end{equation}
Observe here that $E^\perp$ corresponds to the electric drift $E \times B$ that we mentioned earlier; the additional drift $v_d=-\frac{1}{2}v^2 e_2$ is due to the inhomogeneity of the magnetic field intensity. The remarkable point is that this drift has a fixed direction; it makes the particles ``fall'' toward the ``bottom'' of the slice. At this point of the modeling, we now have to distinguish between the plasma-core and the plasma edge (see figure 1), the only difference between the two we are concerned with, being that the core is much hotter than the edge. This means from a kinetic point of view that the velocities are much smaller in the edge.

We now divide the slice into two areas: we denote the part $x_1>0$ the \textbf{``bad curvature''} side and the part $x_1<0$ the \textbf{``good curvature''} side: indeed, we expect the plasma in the ``good curvature'' side to be well confined, while the plasma in the ``bad curvature'' region is badly confined. This behaviour can be easily predicted with the following heuristic study in the ``bad curvature'' side:

\begin{center}
 \psfrag{a}{Hot plasma}
 \psfrag{b}{Cold plasma}
 \psfrag{c}{$E$}
 \psfrag{d}{$E$}
 \psfrag{e}{$E^\perp$}
 \psfrag{f}{$E^\perp$}
 \psfrag{g}{$v_d$}
\includegraphics[scale=0.6]{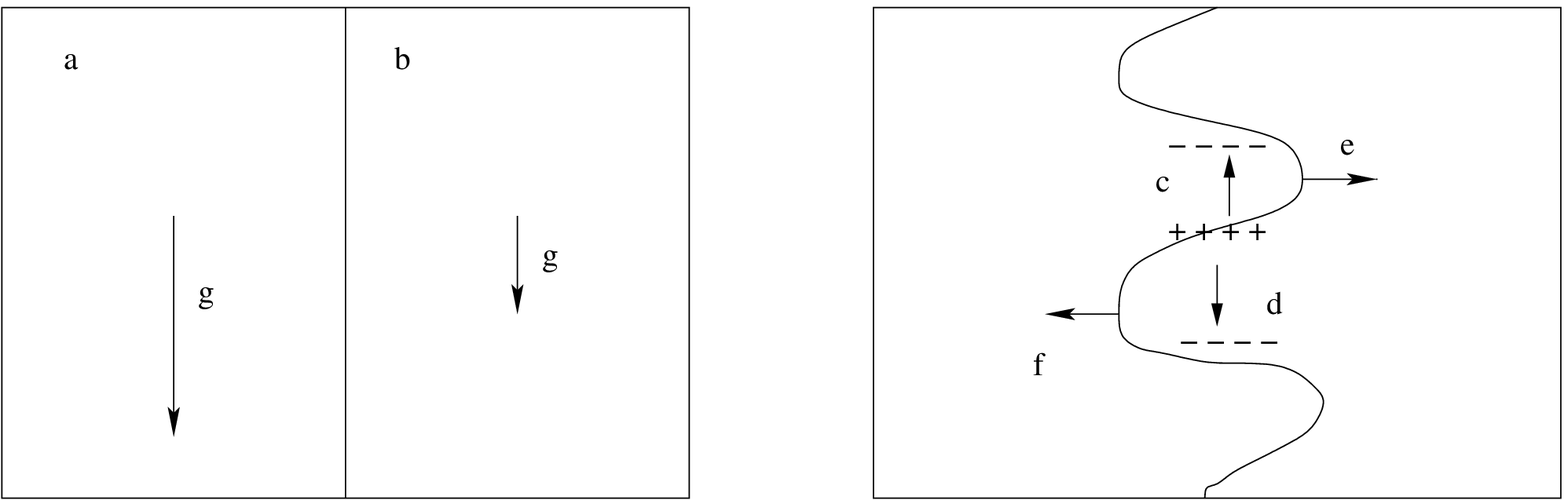}
\end{center}

Particles in the hot plasma drift faster (left figure), so if there is any perturbation (right figure), there appears a separation of charge creating an electric field $E$, which entails a drift $E^\perp$ that accentuates the perturbation: in other words, the equilibrium is unstable.
This discussion is part of the folklore in plasma physics for tokamaks and this instability is recognized to be one of the main sources of disruption for the plasma.

In the other hand one can lead the same qualitative analysis in the ``good curvature'' side and show in this case stability. 


\subsection{Objectives and results of this paper}

In this paper, following the previous heuristic argument, we will particularly focus on system (\ref{model1}), which is a kind of simplified "bi-fluid" version of (\ref{equ}). We consider that the plasma is made of two mixable phases, one being the hot plasma (with constant temperature $T^+$ and density $\rho^+(t,x)$) and the other the cold plasma (with constant  temperature $T^-$ and density $\rho^-(t,x)$) . Of course, hot and cold means that $T^+ >T^-$.
 
\begin{equation}
\label{model1}
  \left\{
 \begin{array}{ll}
\partial_t \rho^+ -T^+\partial_{x_2}\rho^+ + E^\perp.\nabla_x \rho^+ = 0 \\
\partial_t \rho^-  -T^-\partial_{x_2}\rho^+ + E^{\perp}.\nabla_x \rho^- = 0 \\
  E= -\nabla_x V \\
    \displaystyle{-\Delta_x V = \rho^+ + \rho^--1} \\
V=0 \text{  on } x_1=0,L \\
(\rho^+, \rho^-)_{\vert t=0}= (\rho^+_0, \rho^-_0) \text{ with } \int \rho^+_0 + \rho^-_0 = 1
 \end{array}
\right.
  \end{equation}
for $t\geq 0, x \in [0,L]\times {\mathbb{R}}/{L\mathbb{Z}}$ and $L$ is the size of the box.

The temperature $T(t,x)$ of the plasma is given by:
\begin{equation}
T(t,x)= \frac{\rho^+(t,x) T^+ + \rho^-(t,x) T^- }{\rho^+(t,x)  + \rho^-(t,x)  }
\end{equation}
(We refer to Section \ref{mod} for more details.)
 
Unfortunately, we were able to derive this system only formally  from system (\ref{larmor}) and had to make some physical and mathematical approximations. These are precisely explained in Section \ref{mod}. 

We observe that this system shares structural similarities with 2D Euler equations in vorticity form, which describe an incompressible inviscid fluid. 
\[
\text{Plasma} \quad \leftrightarrow \quad \text{Fluid}
\]
\[
\text{density} \quad \rho- 1 \quad \leftrightarrow \quad \text{vorticity}  \quad \omega
\]
\[
\text{(rotated) electric field} \quad  E^\perp = \nabla^\perp \Delta^{-1}(\rho-1)  \quad \leftrightarrow \quad \text{velocity} \quad  u= \nabla^\perp \Delta^{-1}\omega
\]

Hence, our system can be seen somehow as a Euler system with two kinds of vorticities.

Such an analogy between strongly magnetized plasmas and bi-dimensional ideal fluids has been observed  for a long time by physicists (for instance, see \cite{HM}). We mention that the convergence towards $2D$ Euler in strong magnetic fields regimes (but different from the one studied here) was rigorously established by Golse and Saint-Raymond \cite{GSR1} and Brenier \cite{Br}.

Let us now define precisely the stability and instability notions that we will work on until the end of the paper. One should be aware that for such infinite-dimensional dynamical systems, the choice of the norm is particularly important.

\begin{deft}
 Let $\xi$ be a solution to (\ref{model1}).  
This solution is said to be stable with respect to the $X$ norm if for any $\eta>0$, there exists $\delta>0$ such that: for any solution $\rho$ to (\ref{model1}), the initial control $\Vert \rho(0)-\xi(0)\Vert_X\leq \delta$ implies that for any $t\geq 0$, $\Vert \rho(t)-\xi(t)\Vert_X\leq \eta$.

Otherwise, the solution $\xi$ is said to be unstable  with respect to the $X$ norm.
\end{deft}
 Of course, instability will then be interpreted as bad confinement, and stability as good confinement. 

We will investigate stability and instability around the following steady states, modeling the good and bad curvature sides:

\begin{equation}
 \mu^{bad}(x_1)= \left({\mu^{bad,+}=1-\frac{x_1}{L}},{\mu^{bad,-}=\frac{x_1}{L}}\right)
\end{equation}

\begin{equation}
 \mu^{good}(x_1)= \left(\mu^{good,+}=\frac{x_1}{L},\mu^{good,-}=1-\frac{x_1}{L}\right)
\end{equation}
which actually model a linear transition between the hot and the cold plasma. Indeed, for $\mu^{bad}$, the temperature of the plasma is given by:
\[
T(t,x)= T^- \frac{x_1}{L} + T^+\left(1-\frac{x_1}{L}\right)
\]
whereas for  $\mu^{good}$, it is given by:
\[
T(t,x)= T^+ \frac{x_1}{L} + T^-\left(1-\frac{x_1}{L}\right)
\]
Hence the profile of the temperature is a straight line with a slope equal to the so-called temperature gradient $\frac{T^+- T^-}{L}$ . We mention that such linear profiles seem physically relevant in the edge of the tokamak (according to the graphs  in \cite{Green} or \cite{Wes}).

Despite this rather rough model, our predictions will qualitatively correspond to observations made by physicists.

Our first objective will be to confirm the linear scenario exposed in the heuristic study  by exhibiting a growing mode with maximal growth for the linearized operator in the bad-curvature area (but only when the temperature gradient $\frac{T^+- T^-}{L}$ is not too large) and by showing that there is no such growing mode in the good-curvature area. 

Then our aim is to show that nonlinear instability also holds. As system (\ref{model1}) looks a lot like 2D Euler, it is not so surprising that techniques allowing to pass from spectral instability to nonlinear instability for 2D Euler may apply here.
On the topic of stability and instability of ideal plane flows, we mention some recent developments;  let us nevertheless emphasize that this list is by no means exhaustive. In \cite{Gre},  Grenier proved instability  in the  $L^2$  velocity norm for some shear flows with zero Lyapunov exponent. In \cite{BGS},  Bardos, Guo and Strauss, following a method introduced in \cite{GSS}, proved instability in the $L^2$ vorticity norm around some steady states. It is assumed that the linearized operator has a growth exceeding the Lyapunov exponent of the steady flow. We also mention the paper of Vishik  and Friedlander \cite{VF} where instability in the $L^2$ velocity norm is proved under the same assumptions on the steady states.  The best result available by now is due to Lin \cite{Lin04a}. Under rather general assumptions on the steady states (in particular, there is no assumption on the growth of the linearized operator), he showed nonlinear instability in the $L^2$ vorticity norm and in the same time, that velocity grows exponentially in the $L^2$ norm. In this work, we will obtain similar results to those of Lin.

On the other hand, let us emphasize that in our linear analysis, when the temperature gradient $\frac{T^+- T^-}{L}$  exceeds a threshold, there is no growing mode in the bad curvature side. We will show that the nonlinear equations inherit this linear property.
This rather unexpected stability phenomenon can be interpreted as the so-called \textbf{High Confinement mode} (\textbf{H-mode} for short), by opposition to the "standard" regime, referred to as the Low Confinement mode (L-mode for short).
The H-mode is a high-confinement regime  obtained by heating of the plasma and triggered when the heating power exceeds some threshold. It has been experimentally observed by physicists for a long time: it was discovered in the ASDEX tokamak \cite{Wag}, we also refer to  \cite{Green}, \cite{Ido} and  (\cite{Wes}, Section 4.13).
We can also remark that the H-mode is accompanied with an increase of the gradient of temperature (\cite{Green},  \cite{Wes}). These experimental observations fit very well with our qualitative results.

There exists a huge literature in physics on this particular topic. Nevertheless, despite a huge amount of works, the H-mode is still rather mysterious. Its understanding, especially the mechanism of transition from L-mode to H-mode is crucial for fusion research. To the very best of our knowledge, the H-mode has never been rigorously justified at the nonlinear or even at the linear level, with such a simple model.

It is sometimes believed that the formation of confining transport barriers is due to a sheared $E\times B$ flow. In some sense, our model shares similarities with linear shear flows (the linearized equations are similar); thus our study is not in contradiction with these considerations.

We finally mention that in the physics papers,  the H-mode is most of the time numerically investigated with more complicated models (including more physics, such as the effect of collisions, friction, energy sources), we refer to \cite{Fig} and references therein. Our model can be seen as a two-temperature caricature of the model of \cite{Fig}.
The transition to the H-mode is also numerically investigated in  \cite{Ito}, where the existence of thresholds is shown. Unfortunately, we were not able to find any analytical formulae for those thresholds that we could have compared with ours.

The main results proved in this paper (Corollary \ref{stabi} and Theorem \ref{insta}) are gathered in the following theorem:

\begin{thm}For system (\ref{model1}):
\begin{enumerate}
\item (Nonlinear stability)

 The equilibrium  $\mu^{good}$ is nonlinearly stable with respect to the $L^2$ norm.
 
 If the temperature gradient  $\frac{T^+- T^-}{L}$ satisfies:
 \begin{equation}
 \frac{T^+- T^-}{L} > \frac{1}{\pi^2}
\end{equation}
 
 then the equilibrium  $\mu^{bad}$ is nonlinearly stable with respect to the $L^2$ norm.

\item (Nonlinear instability)

 If the temperature gradient satisfies
 \begin{equation}
 \frac{T^+- T^-}{L} < \frac{4}{5\pi^2}
\end{equation}

There exist constants $\delta_0,\eta_0>0$ such that for any $0<\delta<\delta_0$ and any $s\geq0$ there exists a solution $\rho$ to (\ref{model1}) with $\Vert \rho(0)- \mu^{bad} \Vert_{H^s} \leq \delta$ but such that:
\begin{equation}
\Vert E(t_\delta) \Vert_{L^2} \geq \eta_0
\end{equation}
denoting $E(t_\delta)= \nabla \Delta^{-1} (\rho^+(t_\delta)+\rho^-(t_\delta)-1)$ the electric field at time $t_\delta= O(\vert \log \delta \vert)$.

In particular, the equilibrium  $\mu^{bad}$ is nonlinearly unstable with respect to the $L^2$ norm.

\end{enumerate}
\end{thm}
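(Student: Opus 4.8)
The plan is the standard two–step route for hydrodynamic (in)stability: first a complete spectral study of the operator $L$ obtained by linearizing (\ref{model1}) at $\mu^{bad}$ and $\mu^{good}$, then an Arnold–type energy–Casimir argument on the stable side and a Lin–type bootstrap on the unstable one. Set $\rho=\mu+\sigma$, $\sigma=(\sigma^+,\sigma^-)$, and $V=-\Delta^{-1}(\sigma^++\sigma^-)$ (Dirichlet in $x_1$, periodic in $x_2$), so that $E=-\nabla V$ and $\|E\|_{L^2}=\|\sigma^++\sigma^-\|_{H^{-1}}$. Since $\mu^{bad,\pm},\mu^{good,\pm}$ are affine in $x_1$ and sum to $1$, the associated potential vanishes and the linearized system is $\partial_t\sigma^\pm=T^\pm\partial_{x_2}\sigma^\pm\mp\tfrac1L\partial_{x_2}V$ at $\mu^{bad}$ (opposite signs at $\mu^{good}$). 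Because $\mu^{\cdot,\pm}$ are affine, the Poisson solve is diagonal in the basis $\sin(m\pi x_1/L)\,e^{ikx_2}$, $k=2\pi n/L$, and the flow splits into $2\times2$ ODE blocks $\dot a_{k,m}=ikA_{k,m}a_{k,m}$. At $\mu^{bad}$ one eigenvalue of the block satisfies $\operatorname{Re}\gamma_{k,m}=\tfrac{|k|}{2}\sqrt{\big(\tfrac{4(T^+-T^-)}{L\mu_{k,m}}-(T^+-T^-)^2\big)_+}$ with $\mu_{k,m}=k^2+m^2\pi^2/L^2$, hence $\operatorname{Re}\gamma_{k,m}>0$ for some $(k,m)$ iff $\tfrac{T^+-T^-}{L}<\max_{n,m\ge1}\tfrac{4}{\pi^2(4n^2+m^2)}=\tfrac{4}{5\pi^2}$, the fastest mode for moderate gradients being $n=m=1$; at $\mu^{good}$ the corresponding discriminant is negative, so the spectrum is purely imaginary. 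Since non-diagonalizable blocks occur only at exceptional gradients and $A_{k,m}\to\operatorname{diag}(T^+,T^-)$ as $\mu_{k,m}\to\infty$, one also records $\|e^{tL}\|_{L^2},\|e^{tL}\|_{H^{-1}}\le C(1+t)e^{\Lambda t}$ with $\Lambda=\sup_{k,m}\operatorname{Re}\gamma_{k,m}\le\sqrt{(T^+-T^-)/L}<\infty$.

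\emph{Nonlinear stability.} Two families of invariants are available: the distribution functions of $\rho^\pm$ (each $\rho^\pm$ is transported by the divergence–free, boundary–tangent field $E^\perp-T^\pm e_2$, so $\int G(\rho^\pm)$ is conserved for every $G$, in particular all $\|\rho^\pm\|_{L^p}$), and the energy–Casimir functional $\mathcal A=\tfrac12\|\nabla V\|_{L^2}^2-T^+\!\int x_1\rho^+-T^-\!\int x_1\rho^-$, which is conserved since $\tfrac{d}{dt}\tfrac12\|\nabla V\|^2=-\!\int(T^+\rho^++T^-\rho^-)\partial_{x_2}V=\tfrac{d}{dt}\big(T^+\!\int x_1\rho^++T^-\!\int x_1\rho^-\big)$. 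Using conservation of $\int\rho^\pm$ and $\|\rho^\pm\|_{L^2}$, $\mathcal A$ reduces along any solution to the exact conserved quadratic quantity $\tfrac12\|\sigma^++\sigma^-\|_{H^{-1}}^2-\tfrac{T^+L}{2}\|\sigma^+\|_{L^2}^2+\tfrac{T^-L}{2}\|\sigma^-\|_{L^2}^2$ (modulo a data–dependent constant), with $+$ and $-$ interchanged at $\mu^{good}$. The point is then that, combined with equimeasurability of $\rho^\pm(t)$ with the data (hence with $\mu^{\cdot,\pm}$ up to $O(\delta)$), this quantity controls $\|\sigma(t)\|_{L^2}$: the Poincaré inequality $\|g\|_{H^{-1}}\le\tfrac{L}{\pi}\|g\|_{L^2}$ on $[0,L]$ — i.e. the first Dirichlet eigenvalue $\pi^2/L^2$ of $-\partial_{x_1}^2$ — makes the expression coercive in the $L^2$ norm exactly when $\tfrac{T^+-T^-}{L}>\tfrac1{\pi^2}$ in the $\mu^{bad}$ case, and unconditionally in the $\mu^{good}$ case; a standard continuity argument then yields the stability statement of the Definition. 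I expect this last step — extracting $L^2$ control of \emph{both} components from a form that only sees $\sigma^++\sigma^-$ through a negative–order norm, the missing control being supplied by the full family of Casimirs — to be the delicate core of the stability half, and to be exactly where the threshold $\tfrac1{\pi^2}$ is forced.

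\emph{Nonlinear instability of $\mu^{bad}$.} For $\tfrac{T^+-T^-}{L}<\tfrac4{5\pi^2}$ fix the growing eigenmode $v_\star=(v^+_\star,v^-_\star)$ (real part), $\operatorname{Re}\gamma_\star=\lambda_\star>0$, and start from $\rho(0)=\mu^{bad}+\delta v_\star$, so $\|\rho(0)-\mu^{bad}\|_{H^s}=O(\delta)$ for every $s$. With $\sigma=\rho-\mu^{bad}$ solving $\partial_t\sigma=L\sigma+\mathcal N(\sigma)$, $\mathcal N(\sigma)^\pm=-E^\perp\!\cdot\nabla\sigma^\pm=-\operatorname{div}(E^\perp\sigma^\pm)$, two structural facts defeat the apparent derivative loss: first, the nonlinearity is a transport term, so it drops out of the $L^2$ balance, $\tfrac{d}{dt}\|\sigma\|_{L^2}^2=\tfrac2L\langle\partial_{x_2}V,\sigma^--\sigma^+\rangle$ with $\|\partial_{x_2}V\|_{L^2}\le\|\sigma^++\sigma^-\|_{H^{-1}}$; second, $\|\mathcal N(\sigma)\|_{H^{-1}}\le\|E^\perp\sigma\|_{L^2}\le\|E^\perp\|_{L^\infty}\|\sigma\|_{L^2}$ and $\|E^\perp\|_{L^\infty}\lesssim\|\sigma\|_{L^p}\lesssim\|\sigma\|_{L^2}^{2/p}\|\sigma\|_{L^\infty}^{1-2/p}$, where $\|\sigma^\pm(t)\|_{L^\infty}\le\|\rho^\pm(0)\|_{L^\infty}+\|\mu^{bad,\pm}\|_{L^\infty}=O(1)$ is uniform in time by conservation of $\|\rho^\pm\|_{L^\infty}$. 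On the bootstrap interval $\{t:\|\sigma(t)\|_{H^{-1}}\le2C\delta e^{\lambda_\star t}\}$, the first fact gives $\|\sigma(t)\|_{L^2}\le C_1\delta e^{\lambda_\star t}$, hence by the second $\|\mathcal N(\sigma(t))\|_{H^{-1}}\lesssim(\delta e^{\lambda_\star t})^{1+2/p}$; the Duhamel formula in $H^{-1}$ with the semigroup bound then closes the estimate up to $t_\delta=\tfrac1{\lambda_\star}\log\tfrac\theta\delta=O(|\log\delta|)$ for a fixed small $\theta$. At $t_\delta$ the linear part of $\sigma$ has $H^{-1}$–size $\sim\theta\|v^+_\star+v^-_\star\|_{H^{-1}}>0$ (the unstable mode necessarily has $\sigma^++\sigma^-\not\equiv0$, otherwise the block is diagonal with imaginary spectrum), while the nonlinear correction is $O(\theta^{1+2/p})\ll\theta$; hence $\|E(t_\delta)\|_{L^2}=\|\sigma^+(t_\delta)+\sigma^-(t_\delta)\|_{H^{-1}}\ge\eta_0>0$, and $\|E\|_{L^2}\le\tfrac{\sqrt2\,L}{\pi}\|\sigma\|_{L^2}$ upgrades this to $L^2$–instability of the pair. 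The derivative loss in $\mathcal N$ is the only genuine obstacle on this side, and the two facts above — estimating the nonlinearity in $H^{-1}$, and using that a transport nonlinearity is $L^2$–conservative while all $L^p$–norms of $\rho^\pm$ are frozen — are precisely what circumvents it.
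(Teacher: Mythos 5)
Your instability half is sound, but your stability half has a concrete gap at exactly the point you flag as ``the delicate core.'' The energy--Casimir invariant $\mathcal A=\tfrac12\|\nabla V\|_{L^2}^2-T^+\int x_1\rho^+-T^-\int x_1\rho^-$ is indeed conserved, and your reduction via the Casimirs $\|\rho^\pm\|_{L^2}^2$ and $\int\rho^\pm$ is correct; but the resulting quadratic form, $\tfrac12\|\nabla V\|^2-\tfrac{T^+L}{2}\|\sigma^+\|_{L^2}^2+\tfrac{T^-L}{2}\|\sigma^-\|_{L^2}^2$ at $\mu^{bad}$ (and $\tfrac12\|\nabla V\|^2+\tfrac{T^+L}{2}\|\sigma^+\|^2-\tfrac{T^-L}{2}\|\sigma^-\|^2$ at $\mu^{good}$), is \emph{indefinite in the pair} $(\sigma^+,\sigma^-)$: the two species enter with opposite signs and both coefficients $T^\pm L/2$ are positive, so taking $\sigma^+\equiv0$, $\sigma^-$ arbitrary shows that no Poincar\'e inequality on $\|\nabla V\|^2$ can make it coercive in $\|\sigma\|_{L^2}$ --- in the good-curvature case as well as the bad one. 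Equimeasurability cannot close this (a rearrangement of $\mu^{\cdot,-}$ can be $O(1)$ away in $L^2$). The missing ingredient is one more invariant: the impulse $\int x_1(\rho^++\rho^-)\,dx$, equivalently the conservation of $\|\sigma^+\|_{L^2}^2-\|\sigma^-\|_{L^2}^2$, which follows from the identity $\int E_2(\rho^+-\mu^+)\,dx=-\int E_2(\rho^--\mu^-)\,dx$, i.e.\ from $\int\partial_{x_2}V\,\Delta V\,dx=0$ (Lemma \ref{growth} of the paper). Adding the appropriate multiple of this invariant symmetrizes your form into the paper's functionals $\mathcal E$ and $\mathcal F$ of Theorem \ref{stab}, which are manifestly coercive (unconditionally for $\mu^{good}$, and under the Poincar\'e condition $\frac{T^+-T^-}{L}>\frac{1}{\pi^2}$ for $\mu^{bad}$). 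Without that identity your argument does not close; with it, your route and the paper's direct computation of $\frac{d}{dt}\mathcal E$ become equivalent.

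For the instability half you take the Bardos--Guo--Strauss/Lin route (bootstrap in $H^{-1}$ via Duhamel, nonlinearity estimated as $\|\operatorname{div}(E^\perp\sigma)\|_{H^{-1}}\lesssim\|E\|_{L^\infty}\|\sigma\|_{L^2}$ with the $L^\infty$ bound on $\rho^\pm$ frozen by transport), which the paper explicitly records as a viable alternative in a remark; the paper instead builds Grenier's high-order expansion $\rho_{app}=\sum\delta^i u_i$ to get the $L^2$ lower bound on $\rho-\mu$, and then, notably, uses the conserved functional $\mathcal F$ itself to convert that into the lower bound on $\|E(t_\delta)\|_{L^2}$. Your approach gets the field estimate directly and avoids the high-order construction, at the price of needing the semigroup bound on $H^{-1}$ with constants uniform over all Fourier blocks (uniform diagonalizability of the $2\times2$ matrices $A_{k,m}$ away from the threshold), which you only sketch; the paper's Weyl/Vidav argument (Corollary \ref{spectral}) and smoothness of the growing mode (Lemma \ref{reg}) serve the analogous purpose there. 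Your linear analysis and the thresholds $\frac{4}{5\pi^2}$ and $\frac{1}{\pi^2}$ agree with the paper's.
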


\subsection{Organization of the paper}

The present paper is organized as follows: section \ref{gyrolimit} is devoted to the study of the limit $\epsilon \rightarrow 0$ for the system (\ref{larmor}). In section \ref{mod} we present the simplified bi-fluid model we study in order to investigate stability and instability for the plasma. Section \ref{linearized}  is dedicated to the study of the linearized system around the steady states $\mu^{good}$ and $\mu^{bad}$ ; in particular we show the existence of dominant growing mode in the ``bad curvature'' region, provided that the gradient of temperatures is not too large. If the temperature gradient exceeds some threshold, then there is linear stability.
In section \ref{stability}, we are concerned with the nonlinear stability property for the ``good curvature'' region and for the ``bad curvature'' region for large enough temperature gradients (referred to as the high confinement mode in plasma physics), which will be achieved by exhibiting remarkable energies around the steady states.
In section \ref{instability},  for small enough temperature gradients we pass from linear spectral instability to nonlinear  instability in the $L^2$ vorticity norm, using a high order approximation method introduced by Grenier. Then we prove that the electric field also grows exponentially in the $L^2$ norm, by using the energy exhibited in the previous section. 

\section{Gyrokinetic derivation of the equations}
\label{gyrolimit}

Following Frénod and Sonnendrücker (\cite{FS2}), we can use two-scale convergence tools in order to derive the gyrokinetic equation we are interested in. We shall not dwell on the rigorous derivation of this system since the justifications in two dimensions are essentially done in \cite{FS2}.

First of all, let us recall precisely the two-scale convergence notions (due to Nguetseng \cite{Ngu} and Allaire \cite{Al}) we will use in this section.
 \begin{deft}
 Let $X$ be a separable Banach space, $X'$ be its topological dual space and $(.,.)$ the duality bracket between $X'$ and $X$.
 For all $\alpha>0$, denote by $\mathcal{C}_{\alpha}(\mathbb{R},X)$ (respectively $L^{q'}_{\alpha}(\mathbb{R} ;X')$) the space of $\alpha$-periodic continuous (respectively $L^{q'}$) functions on $\mathbb{R}$ with values in $X$.
 Let $q\in[1;\infty[$. 
 
 Given a sequence $(u_\epsilon)$ of functions belonging to the space $L^{q'}(0,t ;X')$ and a function $U^0(t,\theta) \in L^{q'}(0,T ; L^{q'}_{\alpha}(\mathbb{R} ;X'))$ we say that
 $$u_\epsilon \text{   2-scale converges to   } U^0$$
 if for any function $\Psi \in L^q(0,T ; \mathcal{C}_{\alpha}(\mathbb{R},X))$ we have:
 \begin{equation}
 \lim_{\epsilon \rightarrow 0} \int_0^T \left(u_\epsilon(t), \Psi\left(t,\frac{t}{\epsilon}\right)dt \right)=\frac{1}{\alpha} \int_0^T \int_0^{\alpha} \left(U^0(t,\tau),\Psi(t,\tau) \right) d\tau dt
 \end{equation}
 \end{deft}

The new variable $\tau$ has to be understood as a ``fast-time variable'' which describes the fast time oscillations. As for weak-star convergence in $L^p$ spaces, one can show that boundedness implies $2-$scale convergence in $L^p$ spaces.

 \begin{thm}\label{2scale}(\cite{Ngu}, \cite{Al})
 
 Given a sequence $(u_\epsilon)$ bounded in $L^{q'}(0,t ;X')$, there exists for all $\alpha>0$ a function $U^0_\alpha \in L^{q'}(0,T ; L^{q'}_{\alpha}(\mathbb{R} ;X')$ such that up to a subsequence, 
  $$u_\epsilon \text{   2-scale converges to   } U^0_\alpha$$
  The profile $U^0_\alpha$ is called the $\alpha$-periodic two scale limit of $u_\epsilon$ and the link between $U^0_\alpha$ and the weak-* limit $u$ of $u_\epsilon$ is given by:
  \begin{equation}
  \frac{1}{\alpha}\int_0^\alpha U^0_\alpha d\tau = u
  \end{equation}
 \end{thm}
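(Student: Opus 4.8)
The plan is to realize the two-scale limit as the Riesz-type representation of a weak-$*$ limit of functionals built from $(u_\epsilon)$ on the separable test space, following the classical argument of Nguetseng \cite{Ngu} and Allaire \cite{Al}. First I would introduce, on the separable Banach space $\mathcal{F} := L^q(0,T;\mathcal{C}_{\alpha}(\mathbb{R},X))$, the family of linear forms
$$L_\epsilon(\Psi) = \int_0^T \left(u_\epsilon(t), \Psi\left(t,\frac{t}{\epsilon}\right)\right) dt, \qquad \Psi \in \mathcal{F}.$$
The key analytic input is the averaging (mean value) property for $\alpha$-periodic fast-oscillating functions: for $\Psi \in \mathcal{F}$, the map $t \mapsto \Psi(t,t/\epsilon)$ lies in $L^q(0,T;X)$ and
$$\int_0^T \left\| \Psi\left(t,\frac{t}{\epsilon}\right)\right\|_X^q\, dt \;\xrightarrow[\epsilon\to 0]{}\; \frac{1}{\alpha}\int_0^T \int_0^{\alpha} \|\Psi(t,\tau)\|_X^q\, d\tau\, dt,$$
which is the vector-valued $L^q$ form of the elementary fact that $\int_0^T g(t/\epsilon)\,dt \to \frac{T}{\alpha}\int_0^{\alpha} g$ for continuous $\alpha$-periodic $g$. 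The continuity in the fast variable $\tau$ built into $\mathcal{C}_{\alpha}(\mathbb{R},X)$ is precisely what makes the substitution $\tau = t/\epsilon$ legitimate.

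Combining H\"older's inequality with the assumed bound $\|u_\epsilon\|_{L^{q'}(0,T;X')} \le C$ and the averaging property, I would obtain the uniform estimate
$$\limsup_{\epsilon\to 0} |L_\epsilon(\Psi)| \le C \left( \frac{1}{\alpha}\int_0^T\int_0^{\alpha} \|\Psi(t,\tau)\|_X^q\, d\tau\, dt \right)^{1/q} = C\,\|\Psi\|_{L^q(0,T;L^q_{\alpha}(\mathbb{R};X))}.$$
Since $\mathcal{F}$ is separable (as $X$ is separable and $q<\infty$) and $(L_\epsilon)$ is bounded in $\mathcal{F}'$, Banach--Alaoglu together with separability yields a subsequence along which $L_\epsilon \rightharpoonup L$ weak-$*$, so $L_\epsilon(\Psi) \to L(\Psi)$ for every $\Psi \in \mathcal{F}$. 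The displayed estimate passes to the limit and shows that $L$ is continuous not merely for the $\mathcal{F}$-topology but for the weaker norm of $L^q(0,T;L^q_{\alpha}(\mathbb{R};X))$. As $\mathcal{F}$ is dense in that space, $L$ extends uniquely to a bounded linear form on it.

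The crux of the argument is the identification step: invoking the duality $\big(L^q(0,T;L^q_{\alpha}(\mathbb{R};X))\big)' = L^{q'}(0,T;L^{q'}_{\alpha}(\mathbb{R};X'))$, valid for $q\in[1,\infty)$ thanks to the separability of $X$, I would represent $L$ by a unique profile $U^0_\alpha \in L^{q'}(0,T;L^{q'}_{\alpha}(\mathbb{R};X'))$ such that
$$L(\Psi) = \frac{1}{\alpha}\int_0^T\int_0^{\alpha} \left(U^0_\alpha(t,\tau),\Psi(t,\tau)\right) d\tau\, dt.$$
Comparing with $\lim_\epsilon L_\epsilon(\Psi) = L(\Psi)$ gives exactly two-scale convergence of the subsequence towards $U^0_\alpha$. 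Finally, to recover the link with the weak-$*$ limit $u$, I would test against $\Psi$ independent of the fast variable, $\Psi(t,\tau)=\psi(t)$: then $\Psi(t,t/\epsilon)=\psi(t)$, so the left-hand side converges to $\int_0^T (u,\psi)\,dt$ by weak-$*$ convergence of $u_\epsilon$, whereas the two-scale limit reads $\int_0^T \big( \tfrac{1}{\alpha}\int_0^{\alpha} U^0_\alpha\, d\tau,\, \psi \big)\,dt$; identifying these for all $\psi$ forces $\frac{1}{\alpha}\int_0^{\alpha} U^0_\alpha\, d\tau = u$. The main obstacle is this representation step: guaranteeing that the abstract limit functional corresponds to a genuine $L^{q'}$ profile rests on the separability of $X$ (to identify the dual of the Bochner space) and on the density of continuous periodic test functions in the $L^q$-space, both of which are ensured by the standing hypotheses.
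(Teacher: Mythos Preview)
Your proposal is correct and follows essentially the same route as the paper's sketch: build the linear forms $L_\epsilon$ on the separable test space $L^q(0,T;\mathcal{C}_\alpha(\mathbb{R},X))$, extract a weak-$*$ limit via Banach--Alaoglu, and identify it with a profile in $L^{q'}(0,T;L^{q'}_\alpha(\mathbb{R};X'))$ by a Riesz-type argument. You are in fact more explicit than the paper on two points it leaves implicit: the use of the averaging property to show that the limit functional is continuous for the weaker $L^q(0,T;L^q_\alpha(\mathbb{R};X))$-norm (which is what justifies the representation in $L^{q'}$ rather than in some space of measures in $\tau$), and the verification of the relation $\tfrac{1}{\alpha}\int_0^\alpha U^0_\alpha\,d\tau = u$ by testing against $\tau$-independent functions.
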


For the reader's sake we recall the main arguments and refer to \cite{Al} for the complete proof.

\begin{proof}[Sketch of proof]Let $\alpha>0$.
We can consider $\varphi_{u_\epsilon}$ :
\[
\Psi \in L^q(0,T ; \mathcal{C}_{\alpha}(\mathbb{R},X)) \mapsto \int_0^T u_\epsilon(t) \Psi\left(t,\frac t \epsilon \right) dt
\]
and show that it is a continuous linear form on $L^q(0,T ; \mathcal{C}_{\alpha}(\mathbb{R},X))$, so that it can be identified with a unique $U_\epsilon$ in $L^q(0,T ; \mathcal{C}_{\alpha}(\mathbb{R},X))'$.
Then we can show that $U_\epsilon$ is uniformly bounded in $L^q(0,T ; \mathcal{C}_{\alpha}(\mathbb{R},X))'$ ; thus, since $L^q(0,T ; \mathcal{C}_{\alpha}(\mathbb{R},X))$ is a separable Banach space, $U_\epsilon$ weakly-* converges  up to a subsequence  to some $U$ in $L^q(0,T ; \mathcal{C}_{\alpha}(\mathbb{R},X))'$. Using  Riesz's representation theorem, one can show that it can be identified with some $U^0_\alpha \in L^{q'}(0,T ; L^{q'}_{\alpha}(\mathbb{R} ;X'))$  and that $u_\epsilon$ two-scale converges to $U_0$.

\end{proof}

We can now state the main result of this section:

  \begin{prop} 
 For each $\epsilon$, let $f_\epsilon$ be a global weak solution to (\ref{larmor}) in the sense of Arsenev .
 
Then, up to an extraction, $f_\epsilon$ 2-scale converges to a function $F$:
 \begin{equation}
 F(t,\tau,x,v)=G(t,x+\mathcal{R}(\tau)v,R(\tau)v)
 \end{equation}
 and $G$ satisfies:
\begin{equation}
\label{gyro}
  \left\{
 \begin{array}{ll}
\partial_t G + \left(\frac{1}{2\pi}\int_0^{2\pi} \mathcal{R}(\tau)\mathcal{E}(t,\tau,x+\mathcal{R}(-\tau)v)d\tau +\begin{pmatrix} -v_1(v_2-x_1) \\ v_2(x_1-v_2) -\frac{1}{2}(v_1^2 +  v_2^2)\end{pmatrix}\right) .\nabla_x G \\+ \left(\frac{1}{2\pi}\int_0^{2\pi} R(\tau)\mathcal{E}(t,\tau,x+\mathcal{R}(-\tau)v)d\tau + \begin{pmatrix} v_2(-x_1+v_2) \\ -v_1(-x_1+v_2) \end{pmatrix}\right).\nabla_v G = 0 \\
  G_{\vert t=0} = f_0  \\ 
  E= -\nabla_x V \\
    \displaystyle{-\Delta V = \int G(t,x+\mathcal{R}(\tau)v,R(\tau)v)dv-1}
 \end{array}
\right.
  \end{equation}
denoting by $R$ and $\mathcal{R}$ the linear operators defined by :
  $$R(\tau)= \begin{bmatrix}\cos \tau & -\sin \tau  \\ \sin \tau & \cos \tau  \end{bmatrix}, \quad \mathcal{R}(\tau) = \left(R(-\pi/2) - R(-\pi/2+\tau) \right).$$
\end{prop}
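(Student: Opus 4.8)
The plan is to follow the two-scale strategy of Fr\'enod and Sonnendr\"ucker \cite{FS2}: establish $\epsilon$-uniform a priori bounds on the Arsenev solutions $f_\epsilon$, extract a two-scale limit via Theorem \ref{2scale} with the Larmor period $\alpha = 2\pi$, identify the constraint it must satisfy, and then pass to the limit in the weak formulation of (\ref{larmor}) against well-chosen test functions. First, the Arsenev framework provides, uniformly in $\epsilon$, the bounds $\|f_\epsilon(t)\|_{L^p}\le\|f_0\|_{L^p}$, conservation of mass, and conservation of the energy $\int\!\!\int \frac{|v|^2}{2} f_\epsilon\,dv\,dx + \frac{\epsilon}{2}\int |E_\epsilon|^2\,dx$ (the magnetic contributions $\frac{v^\perp}{\epsilon}\cdot\nabla_v$ and $-x_1 v^\perp\cdot\nabla_v$ drop out of the energy balance, being orthogonal to $v$). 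Since $E_\epsilon(0)$ is $\epsilon$-independent, this bounds the kinetic energy $\int\!\!\int |v|^2 f_\epsilon\,dv\,dx$ uniformly; together with $\|f_\epsilon\|_{L^\infty}\le\|f_0\|_{L^\infty}$ and the standard $2D$ velocity-moment interpolation this gives $\rho_\epsilon=\int f_\epsilon\,dv$ bounded in $L^\infty(0,T;L^2)$, hence by elliptic regularity $V_\epsilon$ bounded in $L^\infty(0,T;H^2)$ and $E_\epsilon$ in $L^\infty(0,T;H^1)$. Extracting, $f_\epsilon$ two-scale converges to some $F(t,\tau,x,v)$, $2\pi$-periodic in $\tau$ and inheriting the $L^\infty$ bound, and $E_\epsilon$ two-scale converges to some $\mathcal{E}(t,\tau,x)$.

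Next, I would derive the constraint on $F$. Testing (\ref{larmor}) against $\psi(t,t/\epsilon,x,v)$, with $\psi$ smooth, $2\pi$-periodic in its second argument and compactly supported in $v$, the time derivative generates the term $\frac{1}{\epsilon}(\partial_\tau\psi)(t,t/\epsilon,\cdot)$; multiplying the resulting identity by $\epsilon$ and letting $\epsilon\to 0$, all explicitly $O(\epsilon)$ contributions vanish (notably $\epsilon\int f_\epsilon E_\epsilon\cdot\nabla_v\psi$, controlled by the previous bounds), leaving
\[
\frac{1}{2\pi}\int_0^T\!\!\int_0^{2\pi}\!\!\int F\,\bigl(\partial_\tau\psi + v\cdot\nabla_x\psi + v^\perp\cdot\nabla_v\psi\bigr)\,dv\,dx\,d\tau\,dt = 0,
\]
i.e.\ $\partial_\tau F + v\cdot\nabla_x F + v^\perp\cdot\nabla_v F = 0$. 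The characteristics of this operator are the Larmor rotations, along which---as one checks directly---both $R(\tau)v$ and $x + \mathcal{R}(\tau)v$ are constant (this uses $R'(\tau)v + R(\tau)v^\perp = 0$ and $v + \mathcal{R}'(\tau)v + \mathcal{R}(\tau)v^\perp = 0$, immediate from the definitions of $R$ and $\mathcal{R}$); hence the general solution is $F(t,\tau,x,v) = G(t, x + \mathcal{R}(\tau)v, R(\tau)v)$ with $G(t,\cdot) = F(t,0,\cdot)$, since $\mathcal{R}(0) = 0$ and $R(0) = \mathrm{Id}$. As $f_\epsilon(0,\cdot) = f_0$ carries no oscillation, $F(0,\tau,\cdot)\equiv f_0$, so $G_{\vert t=0} = f_0$.

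Then, to obtain the equation for $G$, I would restrict to test functions in the kernel of the fast operator, $\psi(t,\tau,x,v) = \Psi\bigl(t, x + \mathcal{R}(\tau)v, R(\tau)v\bigr)$ for arbitrary smooth $\Psi$: the singular $\frac{1}{\epsilon}$-term then disappears identically, and one passes to the limit $\epsilon\to 0$---the linear terms by two-scale convergence of $f_\epsilon$, and the nonlinear term $\int f_\epsilon E_\epsilon\cdot\nabla_v\psi$ to $\frac{1}{2\pi}\int_0^{2\pi}\!\int F\,\mathcal{E}\cdot\nabla_v\psi$ (see below). Expressing $\nabla_v\psi$ through $\nabla\Psi$ and the transposes of $\mathcal{R}(\tau),R(\tau)$, then changing variables to the guiding-centre coordinates and averaging in $\tau$, the electric force splits into the $E\times B$ drift $\frac{1}{2\pi}\int_0^{2\pi}\mathcal{R}(\tau)\mathcal{E}(t,\tau,x+\mathcal{R}(-\tau)v)\,d\tau$ acting on $\nabla_x G$ and the term $\frac{1}{2\pi}\int_0^{2\pi}R(\tau)\mathcal{E}(t,\tau,x+\mathcal{R}(-\tau)v)\,d\tau$ acting on $\nabla_v G$; the identical averaging applied to $-x_1 v^\perp$ over one Larmor orbit yields---after a routine trigonometric integration---exactly the geometric drifts $\begin{pmatrix}-v_1(v_2-x_1)\\ v_2(x_1-v_2)-\frac{1}{2}(v_1^2+v_2^2)\end{pmatrix}\cdot\nabla_x G$ and $\begin{pmatrix}v_2(v_2-x_1)\\ -v_1(v_2-x_1)\end{pmatrix}\cdot\nabla_v G$ of (\ref{gyro}). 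Passing to the two-scale limit in $-\Delta V_\epsilon = \rho_\epsilon - 1$ identifies $\mathcal{E} = -\nabla_x V$ with $-\Delta_x V(t,\tau,\cdot) = \int G(t,x+\mathcal{R}(\tau)v,R(\tau)v)\,dv - 1$, which completes (\ref{gyro}).

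The genuinely delicate point is the passage to the limit in the nonlinear coupling $f_\epsilon E_\epsilon$: the continuity equation $\partial_t\rho_\epsilon + \frac{1}{\epsilon}\mathrm{div}_x j_\epsilon = 0$ offers no $\epsilon$-uniform equicontinuity in time, so $E_\epsilon$ genuinely oscillates on the fast scale and one cannot argue as in the classical Vlasov--Poisson limit by compactness in time. The remedy---the technical core of \cite{FS2}, which I would import rather than reprove---is to remain within the two-scale framework and upgrade the weak two-scale convergence of $E_\epsilon$ to a strong one: since $-\Delta V_\epsilon = \rho_\epsilon - 1$ makes $E_\epsilon$ one $x$-derivative more regular than $\rho_\epsilon$, Rellich compactness in the space variable, combined with the two-scale structure, lets one pass to the limit in $f_\epsilon E_\epsilon$ against admissible test functions. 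Everything else is standard kinetic theory or a finite trigonometric computation.
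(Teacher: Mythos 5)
Your proposal is correct and follows essentially the same route as the paper (which itself follows Fr\'enod--Sonnendr\"ucker): uniform a priori bounds, extraction of a two-scale limit with period $2\pi$, derivation of the constraint equation $\partial_\tau F + v\cdot\nabla_x F + v^\perp\cdot\nabla_v F=0$ yielding the profile $F=G(t,x+\mathcal{R}(\tau)v,R(\tau)v)$, passage to the limit in the remaining terms with the nonlinear coupling handled by elliptic-regularity compactness imported from \cite{FS2}, and a final trigonometric averaging of the $-x_1v^\perp$ term. The only (cosmetic) difference is that you filter the test functions, testing against $\Psi(t,x+\mathcal{R}(t/\epsilon)v,R(t/\epsilon)v)$, whereas the paper filters the unknown by introducing $g_\epsilon(t,x,v)=f_\epsilon(t,x+\mathcal{R}(-t/\epsilon)v,R(-t/\epsilon)v)$ and passing to the limit in its equation --- these are the same computation written in adjoint form.
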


\begin{proof}  We do not wish to develop the very beginning of the proof since it is strictly identical to the one given in \cite{FS2}. 

The first step consists in deriving the so-called constraint equation. To this end, let $\Psi(t,\tau,x,v)$ be a $2\pi$-periodic oscillating test function in $\tau$ and define: 
$$\Psi^\epsilon \equiv \Psi\left(t, \frac{t}{\epsilon}, x,v\right)$$ 
Then we can write the weak formulation of the Vlasov equation against $\Psi^\epsilon$ and pass to the two-scale limit. We find that the two-scale limit of $f_\epsilon(t,x,v)$, denoted by $F(t, \tau, x,v)$, satisfies the following equation:
\begin{equation}
\partial_\tau F + v_\perp.\nabla_x F + v\wedge e_z.\nabla_v F_\alpha=0,
\end{equation}
As a consequence, $F$ is constant along the characteristics meaning that there exists a profile $G$ with:
\begin{equation}
 F(t,\tau,x,v)= G(t,x+\mathcal{R}(\tau)v,R(\tau)v)
\end{equation}
where $R$ and $\mathcal{R}$ are defined in the proposition.

The next step is to determine the profile $G$. We introduce the filtered function $g_\epsilon$:
\begin{equation}
g_\epsilon(t,x,v) =f_\epsilon(t,x+\mathcal{R}(-t/\epsilon)v,R(-t/\epsilon)v)
\end{equation}
which represents the number density from which we have removed the essential oscillations. Notice that this function is chosen so that $g_\epsilon$ two-scale converges, as well as weakly-* converges to $G$.

We easily obtain the equation satisfied by $g_\epsilon$:
   \begin{equation}
\label{main}
\begin{split}
  \partial_t g_\epsilon +&  \mathcal{R}(t/\epsilon)E_\epsilon(t, x+\mathcal{R}(-t/\epsilon)v).\nabla_x g_\epsilon \\
  +&R(t/\epsilon)E_\epsilon(t, x+\mathcal{R}(-t/\epsilon)v).\nabla_v g_\epsilon \\
-& \mathcal{R}(t/\epsilon)\left((x+\mathcal{R}(-t/\epsilon)v)_1. (R(-t/\epsilon)v)^\perp\right).\nabla_x g_\epsilon \\
-& {R}(t/\epsilon)\left((x+\mathcal{R}(-t/\epsilon)v)_1. (R(-t/\epsilon)v)^\perp\right).\nabla_v g_\epsilon =0
\end{split}
 \end{equation}

We now pass to the limit in the sense of distributions. We can prove that the following convergence holds for the nonlinear terms (using elliptic regularity for the electric field to gain some compactness):
\begin{eqnarray}
  \mathcal{R}(t/\epsilon)E_\epsilon(t, x+\mathcal{R}(-t/\epsilon)v).\nabla_x g_\epsilon \rightharpoonup \frac{1}{2\pi}\int_0^{2\pi} \mathcal{R}(\tau)\mathcal{E}(t,\tau,x+\mathcal{R}(-\tau)v)d\tau.\nabla_x G\\
R(t/\epsilon)E_\epsilon(t, x+\mathcal{R}(-t/\epsilon)v).\nabla_v g_\epsilon \rightharpoonup \frac{1}{2\pi}\int_0^{2\pi} R(\tau)\mathcal{E}(t,\tau,x+\mathcal{R}(-\tau)v)d\tau.\nabla_v G
\end{eqnarray}

Likewise, we have the following convergences for the last two terms (here there is basically nothing to justify since these are linear quantities): 

\begin{equation}
\begin{split}
- \mathcal{R}(t/\epsilon)&\left((x+\mathcal{R}(-t/\epsilon)v)_1 . (R(-t/\epsilon)v)^\perp\right).\nabla_x g_\epsilon \\ \rightharpoonup& -\frac{1}{2\pi}\int_0^{2\pi} \mathcal{R}(\tau)\big((x+\mathcal{R}(-\tau)v)_1. (R(-\tau)v)^\perp\big)d\tau.\nabla_x G
\end{split}
\end{equation}

\begin{equation}
\begin{split}
- {R}(t/\epsilon)&\left((x+\mathcal{R}(-t/\epsilon)v)_1. (R(-t/\epsilon)v)^\perp\right).\nabla_v g_\epsilon \\ \rightharpoonup& -\frac{1}{2\pi}\int_0^{2\pi} R(\tau)\big((x+\mathcal{R}(-\tau)v)_1. (R(-\tau)v)^\perp\big)d\tau.\nabla_v G
\end{split}
\end{equation}

We then compute the following quantities: 
\begin{equation}
 -\frac{1}{2\pi}\int_0^{2\pi} R(\tau)\big((x+\mathcal{R}(-\tau)v)_1\times (R(-\tau)v)^\perp\big)d\tau=\begin{pmatrix} v_2(-x_1+v_2)\\-v_1(-x_1+v_2) \end{pmatrix}
\end{equation}
\begin{equation}
 -\frac{1}{2\pi}\int_0^{2\pi} \mathcal{R}(\tau)\big((x+\mathcal{R}(-\tau)v)_1\times (R(-\tau)v)^\perp\big)d\tau=\begin{pmatrix} -v_1(-x_1+v_2)\\v_2 (x_1-v_2)- \frac{1}{2}(v_1^2+ v_2^2) \end{pmatrix}
\end{equation}

This concludes the proof.

\end{proof}

\subsection*{Qualitative interpretation of the gyrokinetic system} 

The influence of the variations of $\mathcal{B}$ is given by the drift/acceleration terms:
$$\begin{pmatrix} -v_1(v_2-x_1) \\ v_2(x_1-v_2)-\frac{1}{2}(v_1^2 +  v_2^2)\end{pmatrix} .\nabla_x G + \begin{pmatrix} v_2(-x_1+v_2) \\ -v_1(-x_1+v_2) \end{pmatrix}.\nabla_v G,
$$

Let us imagine that there is no electric field in the asymptotic equation (\ref{gyro}). Then, the characteristics are given by the following ODEs:
\begin{equation}
   \left\{
 \begin{array}{ll}
  \frac{dx}{dt}=\begin{pmatrix} -v_1(v_2-x_1) \\ v_2(x_1-v_2) -\frac{1}{2}(v_1^2 +  v_2^2)\end{pmatrix} \\
\frac{dv}{dt}=\begin{pmatrix} v_2(-x_1+v_2) \\ -v_1(-x_1+v_2) \end{pmatrix}
 \end{array}
\right.
\end{equation}

At first sight, this dynamical system seems a bit complicated with some unpleasant quadratic terms. Actually, this system has some nice invariants.

First, notice that 
$$\frac{d}{dt}(x_1-v_2)=0.$$ 
This means that $x_1=v_2 + C_1$ (with $C_1=x_1(0)-v_2(0)$). The equation for $v$ can now be written in the simple form:
\begin{equation}
\frac{dv}{dt}=\begin{pmatrix} -C_1 v_2 \\ C_1 v_1 \end{pmatrix}
\end{equation}
The velocity is thus periodic (and we could compute it easily). Notice also that $$\frac{d}{dt}(v_1^2 + v_2^2)=0,$$
so that $v_1^2 + v_2^2=C_2$ (with $C_2=v_1^2(0) + v_2^2(0)$).

We get as well a periodic motion for $x_1$ (since $x_1=v_2 + C_1$).
Finally we notice
 for $x_2$:
\begin{equation}
 \frac{d}{dt}(x_2+v_1)= -\frac{1}{2}(v_1^2 +  v_2^2)= - \frac{1}{2}C_2
\end{equation}

The motion along the $e_2$ direction is hence a sum of a periodic motion plus a fall which only depends on the initial velocity of the particles (and not on their position).
Such a drift of the particles ``in the bottom'' of the tokamak and depending only on the square of their velocity is predicted by physicists and is often referred to as the $\nabla \mathcal{B}$ drift (\cite{Wes}, Section 2.6). To support our discussion we give some graphs of the characteristic curves (figures \ref{x2} and \ref{x1x2}).

\begin{figure}[h]
\begin{minipage}{0.4\textwidth}
\includegraphics[scale=0.5]{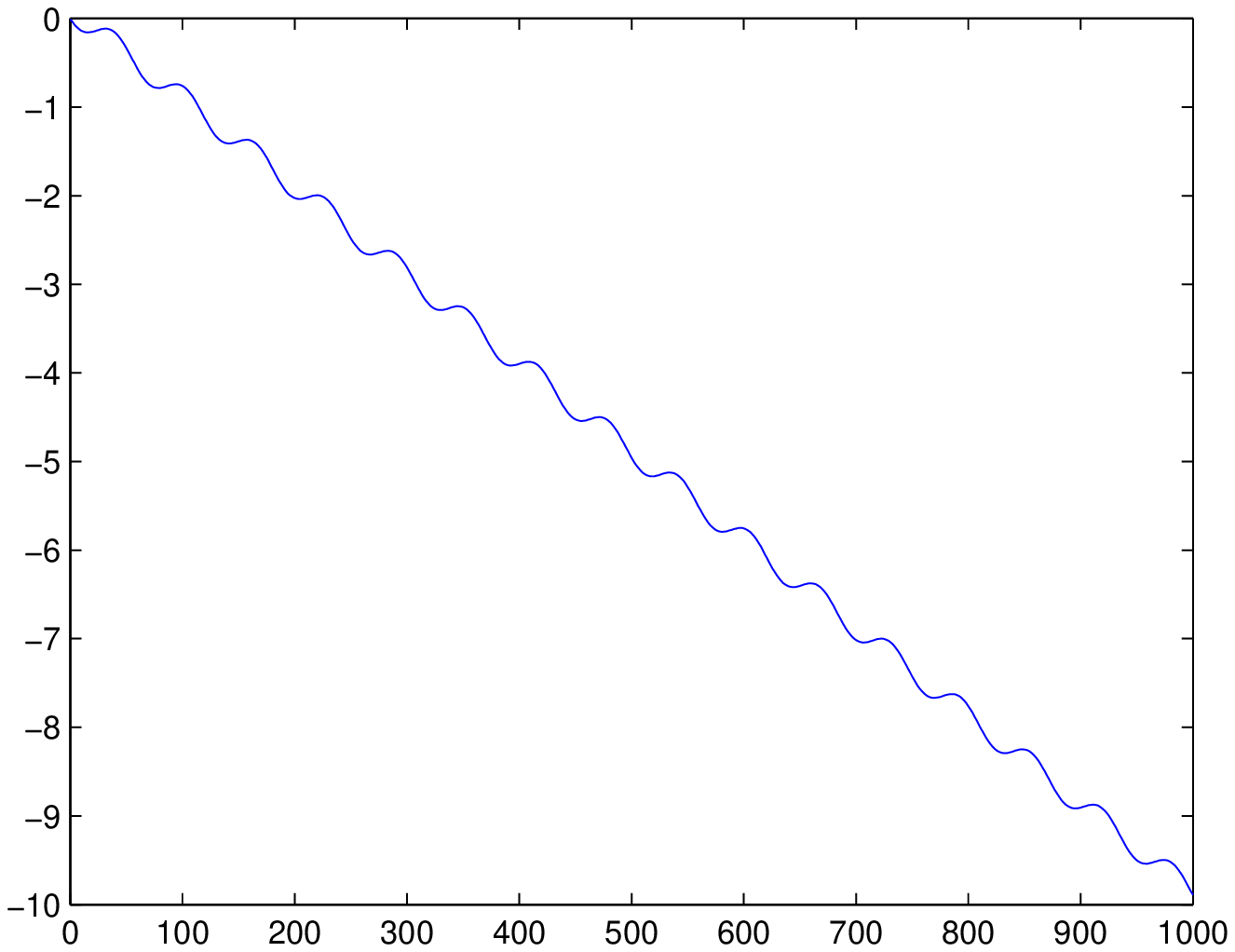}
\caption{$\nabla \mathcal{B}$ drift (in the x-axis: time and in the y-axis: $x_2$)}
\label{x2}
\end{minipage}
\hspace{0.1\textwidth}
\begin{minipage}{0.5\textwidth}
\includegraphics[scale=0.3]{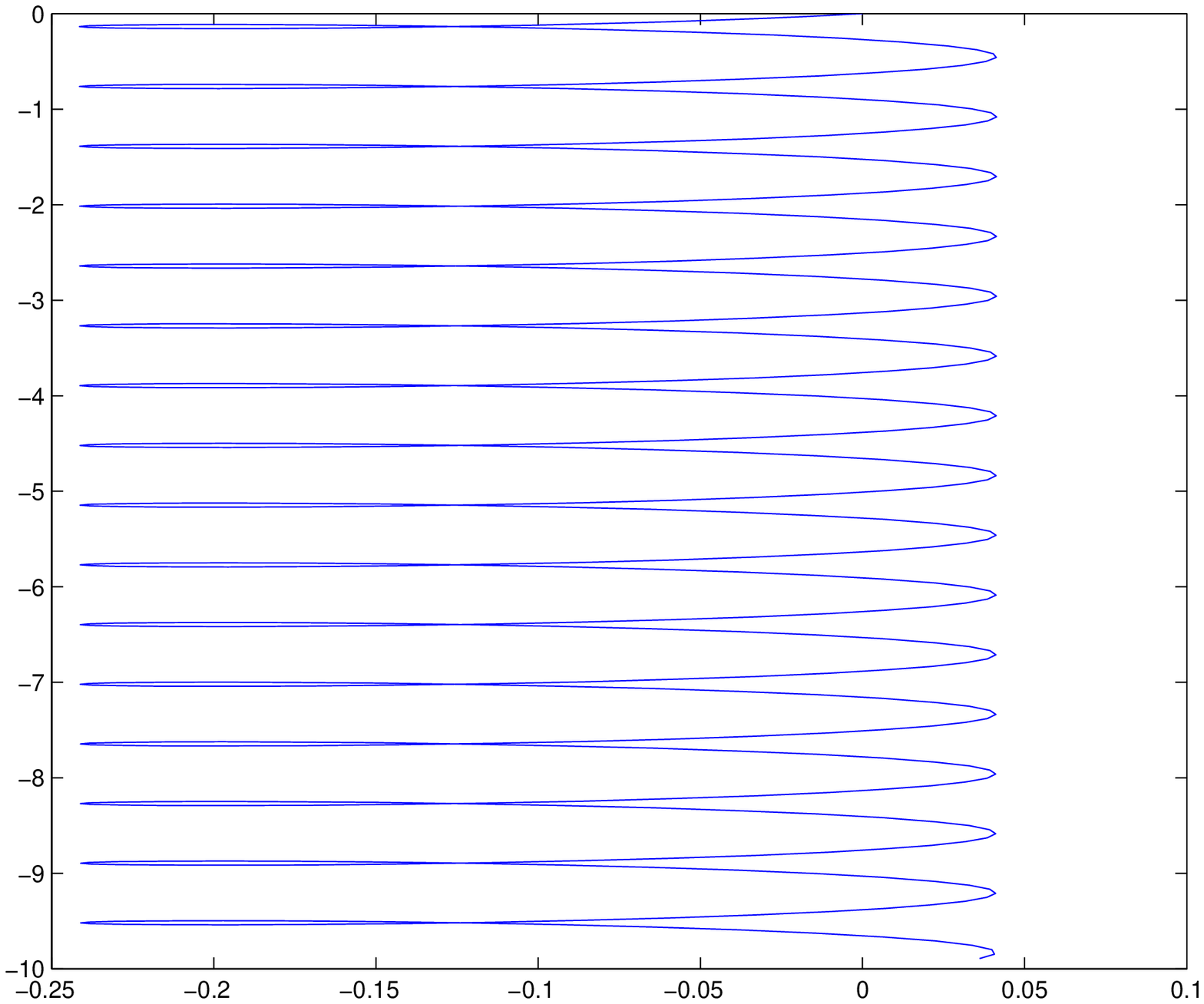}
\caption{Motion of a particle starting at (0,0) in the slice of the tokamak (in the x-axis: $x_1$ and in the y-axis: $x_2$)}
\label{x1x2}
\end{minipage}
\end{figure}

Likewise, Frénod and Sonnendrücker introduced in \cite{FS2} the new variables: 
$$x_c=x-v^\perp=\begin{pmatrix}x_1-v_2 \\ x_2 +v_1\end{pmatrix}$$ 
the so-called guiding center variable and
$$w=-v^\perp$$ 
the so-called Larmor radius variable. With these, they showed that the terms in (\ref{gyro}) involving the electric field were qualitatively close to the drift 
$$ \frac{1}{2\pi}\int_0^{2\pi} \mathcal{E}^\perp (t,\tau,x_c+ R(\tau)w)d\tau.\nabla_{x_c}$$
which corresponds to the gyroaveraged electric drift (\cite{Wes}, Section 2.11).

\section{The simplified mathematical model}
\label{mod}

To be completely rigorous, system (\ref{gyro}) is the system we would have to study in order to investigate good or bad confinement.
Nevertheless, at least at first sight, its algebraic structure seems to be too complicated. Morevore, it is not so clear how to choose a steady state describing the physical situation we want to study.

Consequently we will make several approximations (some of them being quite rough) on (\ref{gyro}) in order to get a more tractable model.

\subsection{A drift-kinetic system}

A first step is to obtain a simplified kinetic system, whose dynamics is close to system (\ref{gyro}).
We therefore consider the following drift-kinetic equation, which is actually a classical physical model (\cite{Wes}, Section 2.11). It is commonly used for numerical simulations (see for instance the GYSELA code \cite{Gra}):
\begin{equation}
\label{pre}
  \left\{
 \begin{array}{ll}
\partial_t f -\frac{1}{2}\vert v\vert^2\partial_{x_{2}}f + E^{\perp}.\nabla_{x} f = 0 \\
  E= -\nabla_{x} V \\
    \displaystyle{-\Delta_{x} V = \int f dv-1}
 \end{array}
\right.
  \end{equation}
 
 This system can be heuristically derived from Newton equations with some elementary physical considerations (see for instance \cite{Wes}, Section 2.6); unfortunately we were not able to derive it rigorously from (\ref{larmor}) or (\ref{gyro}).
 Nevertheless, considering the qualitative study of last paragraph, this seems to be a reasonable model,  if we make the following approximations:
\begin{itemize}
\item We neglect the oscillations in time, which amounts to get rid of the explicit dependance on the fast time variable $\tau$ for the electric field. This can be justified if we consider well-prepared initial data: we refer to the work of Bostan \cite{Bos}.
\item We neglect the gyroaverage operators:
$$ \frac{1}{2\pi}\int_0^{2\pi} E^\perp (t,x_c+ R(\tau)w)d\tau \rightarrow E^\perp(t,x_c)$$
which is reasonable if we consider that the variation of the electric field across a Larmor radius is negligeable.
\end{itemize}

\subsection{The bi-temperature drift-fluid system}

In order to get a simplified fluid model, we assume that the plasma is made of two phases, one being the cold plasma (with low velocities, low temperature $T^-$ and density $\rho^-$) and the other the hot plasma (with large velocities, large temperature $T^+$ and density $\rho^+$). Of course, we take $T^-<T^+$.

Hence, we assume that the solution to (\ref{pre}) takes the form:

\begin{equation}
{f}(t,x,v)={\rho^+(t,x)}\nu^+(v)+{\rho^-(t,x)}\nu^-(v)
\end{equation}
where $\rho^+$ (resp. $\rho^-$) is a positive density such that the total mass is equal to $1$, that is:
$$
\int (\rho^++\rho^-) dx =1.
$$
Furthermore, $\nu^+$ and $\nu^-$ are measures defined by:
 $$\nu^+ = \frac{1}{2\pi \sqrt{2T^+}}\mathbbm{1}_{\vert v \vert = \sqrt{2T^+}}$$ 
$$\nu^- = \frac{1}{2\pi\sqrt{2T^-}}\mathbbm{1}_{\vert v \vert = \sqrt{2T^-}}$$

Considering that transverse particle velocities are isotropically distributed is physically relevant for such magnetized plasmas, as indicated in \cite{Sul}.

 We observe that $\int d\nu^\pm =1$ and $\int v d\nu^\pm=0$. Thus, the charge and current densities are given by:
 \[
 \rho(t,x) := \int f dv = \rho^+(t,x) + \rho^-(t,x)
 \]
 and
 \[
 u(t,x):= \frac{\int f v dv}{\int f dv}=0.
 \]

 In addition, we have:
 $$T^+=\frac 1 2\int \vert v\vert^2 d\nu^+(w) $$
$$T^-=\frac 1 2\int \vert v\vert^2 d\nu^-(w) $$

The kinetic temperature $T(t,x)$ of the plasma is then given by:
\begin{equation}
\begin{split}
T(t,x):=& \frac 1 2 \frac{\int f (v-u(t,x))^2 dv}{\rho(t,x)} \\
=&\frac{\rho^+(t,x) T^+ + \rho^-(t,x) T^- }{\rho^+(t,x)  + \rho^-(t,x)  }
\end{split}
\end{equation}

We moreover assume we can decouple the transport equations satisfied by $\rho^+$ and $\rho^-$.  We get in the end the macroscopic system: 

\begin{equation}
\label{model}
  \left\{
 \begin{array}{ll}
 \partial_t \rho^+ -  T^+ \partial_{x_{2}} \rho^+ + E^\perp.\nabla_{x} \rho^+ =0 \\
  \partial_t \rho^- -  T^- \partial_{x_{2}} \rho^+ + E^\perp.\nabla_{x} \rho^- =0 \\
  E= -\nabla_{x} V \\
    \displaystyle{-\Delta_{x} V = \rho^++ \rho^--1} \\
    (\rho^+, \rho^-)_{\vert t=0}= (\rho^+_0, \rho^-_0) \text{ with } \int \rho^+_0 + \rho^-_0 = 1
 \end{array}
\right.
  \end{equation}
with $x\in [0,L] \times {\mathbb{R}}/{L\mathbb{Z}}$.

As noticed in the introduction, this systems looks like $2D$ incompressible Euler, but with two kinds of vorticities.

Here, the constant  \underline{$L>0$ stands for the size of the box}. The periodicity with respect to $x_2$ is physically justified if we consider that $L$ is small enough with respect to the size of the tokamak, so that we can decompose it in many identical cells of size $L$ (see Figure 1).

We now have to impose some relevant boundary conditions on $x_1=0,L$:
\begin{itemize}
 \item 

For the Poisson equation, we opt for the perfect conductor assumption  on $x_1=0,L$ (which is the ideal case for plasma physics models).
  \begin{equation}
 E^\perp . n  =   \pm E_2= \partial_{{x_c}_2} V = 0 
 \end{equation}
 To this end, we can impose the following Dirichlet boundary condition on $x_1=0,L$:
 \begin{equation}
 V=0
 \end{equation}
  
 From the fluid mechanics point of view, we observe this corresponds to the classical no slip condition. 
\item For the transport equation, we actually do not need any boundary condition. There is indeed no entering or leaving trajectories, since the  linear  ``drift'' operator only entails a motion along the $e_2$ direction, and $E_2=0$ on the boundaries $x_1=0,L$.
\end{itemize}


\par

Following classical works on the Cauchy problem for the $2D$ incompressible Euler system (we refer for instance to the book of Majda and Bertozzi \cite{MajBer}), we get the following global existence and uniqueness result of strong and weak solutions to (\ref{model}): 
\begin{thm}
 Let $\rho_0= (\rho_0^+,\rho_0^-)\in (L^1((0,L)\times\mathbb{R}/L\mathbb{Z}))^2$ with $\rho_0^+,\rho_0^-$ non-negative and $\int (\rho_0^+ + \rho_0^-)dx=1$. 
\begin{enumerate}
\item (Kato, \cite{Kat1}) If $\rho_0$ is $H^s$ (with $s>1$) then there exists a unique classical solution $\rho$ to (\ref{model}) in $\mathcal{C}^0_t([0,\infty[, H^s)\cap \mathcal{C}^1_t([0,\infty[, H^{s-1}) $ with initial data $\rho_0$.

\item (Yudovic, \cite{Yud}) If $\rho_0 \in L^\infty$, then there exists a unique global non-negative weak solution $\rho \in L^\infty_t(L^1\cap L^\infty)$ to (\ref{model}) with initial data $\rho_0$.
\end{enumerate}

\end{thm}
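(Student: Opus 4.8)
\emph{Reduction to a $2D$-Euler-type system.} The plan rests on a structural observation: each density is transported by a divergence-free field tangent to the boundary. Writing $\rho := \rho^+ + \rho^-$, $E^\perp = \nabla^\perp \Delta^{-1}(\rho - 1)$ with the Dirichlet datum $V = 0$ on $x_1 = 0, L$, and $b^\pm := E^\perp - T^\pm e_2$, both equations of (\ref{model}) take the form $\partial_t \rho^\pm + b^\pm \cdot \nabla_x \rho^\pm = 0$ with $\mathrm{div}\, b^\pm = 0$, and $b^\pm$ is tangent to $\partial\Omega$, $\Omega := [0,L] \times \mathbb{R}/L\mathbb{Z}$, because $E^\perp \cdot n = \pm E_2 = 0$ on $x_1 = 0, L$ (precisely the perfect-conductor condition) while $e_2$ is itself tangent to $\partial\Omega$. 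So (\ref{model}) is a two-vorticity version of the $2D$ incompressible Euler equations on $\Omega$, the two components coupled only through the single scalar $\rho - 1$ that generates the velocity, and the strategy is to run the classical Kato and Yudovich arguments for $2D$ Euler (see Majda--Bertozzi \cite{MajBer}). The only extra bookkeeping would be the presence of two components (handled by adding the corresponding estimates), the constant drift $-T^\pm e_2$ (globally Lipschitz and tangent to $\partial\Omega$, hence harmless), and the Dirichlet/periodic boundary conditions, which if convenient can be removed altogether by reflecting $V$ oddly in $x_1$ across $x_1 = 0$ and $x_1 = L$, turning $\Omega$ into the flat torus $\mathbb{R}/2L\mathbb{Z} \times \mathbb{R}/L\mathbb{Z}$.

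\emph{A priori bounds and elliptic estimates.} First I would record the conserved quantities: since $b^\pm$ is divergence-free and tangent to $\partial\Omega$, its flow $X^\pm_t$ is a measure-preserving homeomorphism of $\Omega$, so $\|\rho^\pm(t)\|_{L^p(\Omega)} = \|\rho_0^\pm\|_{L^p(\Omega)}$ for every $p \in [1,\infty]$; in particular $\rho^\pm(t) \geq 0$, $\int_\Omega(\rho^+ + \rho^-)\,dx \equiv 1$, and (the domain having finite measure) $\|\rho(t)\|_{L^2} + \|\rho(t)\|_{L^\infty}$ stays bounded uniformly in $t$. Next, elliptic regularity for the Dirichlet Laplacian on $\Omega$ gives $\|E^\perp\|_{H^{k+1}} \lesssim 1 + \|\rho\|_{H^k}$, and — the place where the borderline lies — the Beale--Kato--Majda-type logarithmic estimate
\begin{equation}
\|\nabla E^\perp\|_{L^\infty} \;\lesssim\; 1 + \|\rho\|_{L^\infty}\,\log\!\big(e + \|\rho\|_{H^s}\big) \qquad (s > 1),
\end{equation}
together with its companion for the $L^\infty$ theory, the log-Lipschitz bound $|E^\perp(x) - E^\perp(y)| \lesssim (1 + \|\rho\|_{L^\infty})\,|x-y|\,\log\frac{C}{|x-y|}$ for $|x-y|$ small, valid as soon as $\rho - 1 \in L^\infty$.

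\emph{Classical solutions ($H^s$, $s > 1$).} Local existence in $\mathcal{C}^0_t H^s \cap \mathcal{C}^1_t H^{s-1}$ would follow from a standard iteration scheme (at each step, solve the two linear transport equations with velocity built from the previous iterate) together with the commutator (Kato--Ponce) energy estimate
\begin{equation}
\frac{d}{dt}\Big(\|\rho^+\|_{H^s}^2 + \|\rho^-\|_{H^s}^2\Big) \;\lesssim\; \|\nabla E^\perp\|_{L^\infty}\Big(\|\rho^+\|_{H^s}^2 + \|\rho^-\|_{H^s}^2\Big),
\end{equation}
closed for short time by the crude bound $\|\nabla E^\perp\|_{L^\infty} \lesssim \|E^\perp\|_{H^s} \lesssim 1 + \|\rho\|_{H^{s-1}}$. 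To reach the whole half-line $[0,\infty[$ I would instead insert the logarithmic estimate: with $Y := \|\rho^+\|_{H^s}^2 + \|\rho^-\|_{H^s}^2$ and $\|\rho\|_{L^\infty}$ bounded by the previous step, $\frac{d}{dt}Y \lesssim (1 + \log(e + Y))\,Y$, so $\log(e + Y(t))$ grows at most exponentially and $Y$ never blows up; feeding $\int_0^T \|\nabla E^\perp\|_{L^\infty}\,dt < \infty$ back into the energy estimate yields global $\mathcal{C}^0_t H^s$ regularity, and $\partial_t \rho^\pm = -b^\pm \cdot \nabla \rho^\pm$ gives the $\mathcal{C}^1_t H^{s-1}$ statement. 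Uniqueness would be the $L^2$ estimate on the difference of two solutions, again closed by the logarithmic control of the velocity.

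\emph{Weak solutions ($L^\infty$, Yudovich).} For $\rho_0^\pm \in L^1 \cap L^\infty$ the log-Lipschitz bound makes each $b^\pm(t,\cdot)$ log-Lipschitz uniformly in $t$, so the characteristic ODE $\dot X = b^\pm(t,X)$ has a unique flow by Osgood's uniqueness theorem, and one \emph{defines} $\rho^\pm(t) := \rho_0^\pm \circ (X^\pm_t)^{-1}$, which is automatically non-negative, lies in $L^1 \cap L^\infty$, and has total mass $1$. Existence of a pair actually solving (\ref{model}) in the weak sense would then come from regularizing the data, applying the previous step, and passing to the limit: the $L^1 \cap L^\infty$ bounds are uniform, the velocities uniformly log-Lipschitz hence equicontinuous, so Arzel\`a--Ascoli on the flow maps and weak-$*$ compactness produce a solution. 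Uniqueness would be Yudovich's energy argument: the natural quantity $\mathcal{E}(t) := \|E(t) - \widetilde{E}(t)\|_{L^2}^2$ (equivalently $\|\rho(t) - \widetilde{\rho}(t)\|_{\dot H^{-1}}^2$, which plays for (\ref{model}) the role that the kinetic energy of the velocity difference plays for $2D$ Euler) satisfies — because the Calder\'on--Zygmund constant in $\|\nabla\Delta^{-1}g\|_{L^p} \lesssim p\,\|g\|_{L^p}$ grows linearly in $p$ — a differential inequality of Osgood type, which forces $\mathcal{E} \equiv 0$ whenever $\mathcal{E}(0) = 0$.

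\emph{Main obstacle.} Conceptually nothing here departs from the $2D$ Euler theory; the one genuinely non-routine point, and the one I would be most careful about, is that we work on a domain \emph{with} boundary carrying mixed boundary conditions, so one must (i) establish the elliptic estimates above up to the boundary for the Dirichlet Laplacian on $\Omega$, and (ii) verify with care that $b^\pm$ is tangent to $\partial\Omega$, so that its flow is globally defined and no trajectory ever escapes through $x_1 = 0$ or $x_1 = L$ — exactly where the perfect-conductor condition $V = 0$ is used, together with the fact that the extra drift $-T^\pm e_2$ is parallel to the boundary. The odd-reflection reduction to $\mathbb{R}/2L\mathbb{Z} \times \mathbb{R}/L\mathbb{Z}$ renders both points standard.
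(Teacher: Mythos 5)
Your proposal is correct, and for the Yudovich part it follows essentially the same route as the paper: conservation of the $L^p$ norms of both densities, the log-Lipschitz bound on $E$ (hence Osgood uniqueness of the characteristics), and existence by regularizing the data and passing to the weak limit as in Majda--Bertozzi, with uniqueness via the Yudovich energy argument on $\Vert E-\widetilde E\Vert_{L^2}^2$. Where you genuinely diverge is part 1: the paper follows Kato's original 1967 scheme, namely a Schauder fixed-point argument for the map $(\xi^+,\xi^-)\mapsto(\rho^+,\rho^-)$ on a convex compact subset of $\left(\mathcal{C}([0,T]\times[0,L]\times\mathbb{R}/L\mathbb{Z})\right)^2$, whose two pillars are exactly the log-Lipschitz estimate and the $L^\infty$ conservation; you instead run the modern energy method, i.e.\ Kato--Ponce commutator estimates on $\Vert\rho^\pm\Vert_{H^s}$ closed globally by a Beale--Kato--Majda-type logarithmic inequality exploiting the conserved $\Vert\rho\Vert_{L^\infty}$. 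Both are standard and valid for this two-vorticity system; the fixed-point route needs less harmonic analysis (no commutator or log-Sobolev estimates, only the log-Lipschitz kernel bound, which is also reused for part 2), while your route gives the quantitative (doubly exponential) growth bound on $\Vert\rho(t)\Vert_{H^s}$ and makes the propagation of $H^s$ regularity explicit rather than inherited from Kato's paper. Your odd-reflection reduction of the Dirichlet problem to the torus $\mathbb{R}/2L\mathbb{Z}\times\mathbb{R}/L\mathbb{Z}$ is a clean way to dispose of the boundary issues that the paper leaves implicit, and your verification that $b^\pm=E^\perp-T^\pm e_2$ is divergence-free and tangent to $x_1=0,L$ is exactly the point the paper also singles out as crucial.
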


\begin{proof}[Sketch of proof]

\begin{enumerate}

\item The existence of a global strong solution follows from a fixed point argument, as in the classical work on 2D Euler by Kato \cite{Kat1}.
Actually, dealing with two "vorticities" only slightly modifies the main lines; therefore, for the sake of brevity, we only recall here the main arguments of the proof.

 We may consider the map $F: (\xi^+, \xi^-) \mapsto (\rho^+, \rho^-)$ where $ (\rho^+, \rho^-)$ is solution to:
 \begin{equation}
  \left\{
 \begin{array}{ll}
\partial_t \rho^+  + \left(E^\perp-T^+ e_2\right).\nabla_x \rho^+ = 0 \\
\partial_t \rho^- + \left(E^\perp-T^- e_2\right).\nabla_x \rho^- = 0 \\
  E= -\nabla_x V \\
    \displaystyle{-\Delta_x V = \xi^+ + \xi^--1} \\
V=0 \text{  on } x_1=0,L \\
(\rho^+, \rho^-)_{\vert t=0}= (\rho^+_0, \rho^-_0) \text{ with } \int \rho^+_0 + \rho^-_0 = 1
 \end{array}
\right.
 \end{equation}
 which is well-defined if $(\xi^+, \xi^-)$ is smooth enough thanks to the characteristics' method.
As in Kato's proof, for any $T>0$, one can show that $F$ is continuous on some convex compact $S$ of $\left(\mathcal{C}([0,T] \times [0,L]\times \mathbb{R}/L\mathbb{Z})\right)^2$ and that $F(S) \subset S$. The existence of a fixed point is finally a consequence of Schauder's theorem. 
The crucial points are:
\begin{itemize}
\item Establishing some log-lipschitz estimate on the electric field (in this case with a constant involving the $L^\infty$ norms of the two vorticities), which is obtained exactly in the same way as for Euler's equation. 

If $\xi^+,\xi^- \in L^\infty$ and $E$ is solution to the elliptic problem
 \begin{equation}
   \left\{
 \begin{array}{ll}
  E= -\nabla_x V \\
    \displaystyle{-\Delta_x V = \xi^+ + \xi^--1} \\
V=0 \text{  on } x_1=0,L \\
 \end{array}
\right.
 \end{equation}
then there exists $C$ depending on $L$ but independent of $\xi^+, \xi^-$ such that for all $(x,y) \in \left([0,L] \times \mathbb{R}/L\mathbb{Z}\right)$, we have:
\begin{equation}
\vert E(x)-E(y) \vert \leq C \left(\Vert \xi^+ \Vert_{L^\infty}+\Vert \xi^- \Vert_{L^\infty}\right)  \vert x-y\vert \log^+ (\vert x-y\vert)
\end{equation}
where $\log^+(s)=    \left\{
 \begin{array}{ll}
 1-\log s \quad \text{if} \quad s\leq1\\
  0  \quad \text{if} \quad s>1\\
 \end{array}
\right.$

\item The $L^\infty$ norm of each "vorticity" $\rho^+$ and $\rho^-$ is conserved by the transport equations, since $\operatorname{div}_x\left(E^\perp-T^\pm e_2\right)=0$.

\end{itemize}

Finally uniqueness is obtained by an energy argument, exactly as in Kato's proof.

\item  For the bi-dimensional Euler equations in a general domain, the result is due to Yudovic \cite{Yud}. His main arguments can be easily adapted and reproduced in our case, with a crucial use of the two above points.  
The result also follows by a simple adaptation of the alternative proof given in (\cite{MajBer}, Theorem 8.1), which consists in regularizing the initial data, solve the smoothed Cauchy problem and then pass to the weak limit  thanks to a crucial compactness result (\cite{MajBer}, Proposition 8.2).
\end{enumerate}

\end{proof}


\subsection{Modeling of the plasma equilibria}

We now consider the following steady states, in order to model what is happening in the ``good curvature'' or the ``bad curvature'' side of the plasma, the only difference being the relative position between the hot and the cold plasma:
\begin{itemize}
 \item in the ``bad curvature'' region:
\begin{equation}
 \mu^{bad}(x_1)= \left(\underbrace{\mu^{bad,+}=1-\frac{x_1}{L}}_{\text{hot plasma}}, \underbrace{\mu^{bad,-}=\frac{x_1}{L}}_{\text{cold plasma}}\right)
\end{equation}
 
\item in the ``good curvature'' region:
\begin{equation}
 \mu^{good}(x_1)= \left(\mu^{good,+}=\frac{x_1}{L},\mu^{good,-}=1-\frac{x_1}{L}\right)
\end{equation}\end{itemize}

These are steady states of (\ref{model}) and the associated electric field is zero.

We observe here for $\mu^{good}$ the temperature $T(t,x)$ is:
\begin{equation}
T(t,x)= T^+ \frac {x_1}{L}+ T^-\left(1-\frac {x_1}{L}\right)
\end{equation}
Such linear transitions between the cold and hot plasma are the most simple model one can think of. We observe that the slope of the line is equal to $\frac{T^+- T^-}{L}$ which is referred to as the temperature gradient in this paper.

We now investigate stability and instability for $\mu^{bad}$ and $\mu^{good}$.

\section{Linear instability in the ``bad curvature'' region}
\label{linearized}

We first consider the case of the ``bad curvature'' region, for which we expect to obtain instability. The equilibrium writes:
\begin{equation}
 \mu^{bad}(x_1)= \begin{pmatrix}1-\frac{x_1}{L} \\ \frac{x_1}{L} \end{pmatrix} \nonumber
 \end{equation}
The first step before trying to prove any instability property for the nonlinear transport equations consists in investigating the problem of instability for the linearized operator around $\mu^{bad}$. We accordingly consider the following linearized system (for $t>0$, $x \in [0,L]\times \mathbb{R}/L\mathbb{Z}$):

\begin{equation}
\label{lin}
  \left\{
 \begin{array}{ll}
\partial_t \rho^+ -T^+\partial_{x_2}\rho^+ - \frac{E_2}{L} = 0 \\
\partial_t \rho^- -T^-\partial_{x_2}\rho^- + \frac{E_2}{L} = 0 \\
  E= -\nabla_x V \\
    \displaystyle{-\Delta_x V = \rho^+ + \rho^-}\text{ , }V=0 \text{ on } x_1=0,L \\
(\rho^+, \rho^-)_{\vert t=0}= (\rho^+_0, \rho^-_0) \text{ with } \int \rho^+_0 + \rho^-_0 = 0
 \end{array}
\right.
  \end{equation}

\subsection{Looking for unstable eigenfunctions}

We look for special solutions under the form $\rho_k(t,x)=\begin{pmatrix}h_k^+(t)g_k(x) \\ h_k^-(t)g_k(x) \end{pmatrix}$. The equations (\ref{lin}) can be restated as:

\begin{equation}
  \left\{
 \begin{array}{ll}
\partial_t (h_k^+(t)g_k(x)) -T^+\partial_{x_2}(h_k^+(t)g_k(x)) - \frac{E_2}{L} = 0 \\
\partial_t (h_k^-(t)g_k(x)) -T^-\partial_{x_2}(h_k^-(t)g_k(x))+ \frac{E_2}{L} = 0 \\
  E= -\nabla_x V \\
    \displaystyle{-\Delta_x V = (h_k^-(t)+h_k^+(t))g_k(x)}\text{ , }  V=0 \text{ on } x_1=0,L
 \end{array}
\right.
  \end{equation}

We take $g_k$ with the particular form $g_k(x)=\sin{(\frac{k_1}{L} \pi x_1)}e^{i2\pi \frac{k_2}{L} x_2}$ (with $k_1, k_2 \in \mathbb{Z}^*$), so that $g_k$ is an eigenfunction for the laplacian with the considered boundary conditions. It satisfies indeed:
\begin{equation}
 \Delta g_k(x) = - \pi^2\left(\frac{k_1^2}{L^2} + 4\frac{k_2^2}{L^2} \right)g_k(x)
\end{equation}
and also $ g_k=0$ on $x_1=0,L$.

The solution to the Poisson equation is then given by:
$$V_k=\frac{1}{\pi^2\left(\frac{k_1^2}{L^2} + 4\frac{k_2^2}{L^2} \right)}(h_k^-(t)+h_k^+(t))g_k(x)$$
and thus we have 
$$E_2= \frac{-i2 k_2/L}{\pi\left(\frac{k_1^2}{L^2} + 4\frac{k_2^2}{L^2}  \right)}(h_k^-(t)+h_k^+(t))g_k(x);$$
 which leads us to study the following first order ordinary differential equation:
\begin{equation}
 \partial_t \begin{pmatrix} h_k^+(t)\\ h_k^-(t)\end{pmatrix} + 1/L \begin{pmatrix} -2i\pi T^+ k_2+ \frac{i2 k_2}{\pi\left(\frac{k_1^2}{L} + 4\frac{k_2^2}{L} \right)} & \frac{i2 k_2}{\pi\left(\frac{k_1^2}{L} + 4\frac{k_2^2}{L}  \right)}\\ -\frac{i2 k_2}{\pi\left(\frac{k_1^2}{L} + 4\frac{k_2^2}{L}  \right)} & -2i\pi T^- k_2 -\frac{i2 k_2}{\pi\left(\frac{k_1^2}{L} + 4\frac{k_2^2}{L}  \right)} \end{pmatrix}\begin{pmatrix} h_k^+(t) \\ h_k^-(t)\end{pmatrix}=0
\end{equation}

We want to compute the eigenvalues of the matrix; its characteristic polynomial states:
$$X^2 + 2i\pi k_2 \left(T^+ +T^-\right)X -4\pi^2 k^2_2 T^+ T^- - \frac{4k_2^2}{\frac{k_1^2}{L} + 4\frac{k_2^2}{L} }\left(T^+-T^-\right)$$
and its discriminant:
\begin{eqnarray}
\Delta&=&-4\pi^2 k_2^2 (T^+-T^-)^2 + \frac{16k_2^2 L }{{k_1^2} + 4{k_2^2} }\left(T^+-T^-\right)  \nonumber\\
&=&-4\pi^2 k_2^2 (T^+-T^-) \left((T^+-T^- ) - \frac{4L}{\pi^2({k_1^2}+ 4{k_2^2} )} \right)
\end{eqnarray}

We recall that by definition,
\[
T^+ -T^- >0
\]

We can now distinguish between two cases:
\begin{itemize}
 \item First case: 
 
\begin{equation}
\frac{4}{5\pi^2}> \frac{T^+-T^-}{L}
\end{equation} 

 In the case where the gradient of temperature is not too large, then there exist $k_1,k_2$ such that $\Delta>0$. We consequently obtain two complex roots, one of which has a stricly negative real part equal to $-\frac{\sqrt{\Delta}}{2}$. In other words, this shows the existence of an unstable mode.
 \item Second case: 
 
\begin{equation}
 \frac{4}{5\pi^2}\leq  \frac{T^+-T^-}{L}
\end{equation} 
 
 In the opposite case, we always have $\Delta\leq 0$ and consequently, we are not able to find a growing mode ! 
 
This phenomenon may at first sight look like a mathematical artifact due to the periodicity constraint in the $x_2$ direction. Nevertheless as explained in the introduction, the existence of such a threshold is well known in plasma physics, beyond which one can expect tremendous confinement properties. In very rough terms: heating brings stability. The stable mode is referred to as the H-mode, by opposition to the L-mode.
\end{itemize}

\begin{rque}
In the ``good curvature'' region, that is around $\mu_{good}$, the linearized system states:

\begin{equation}
\label{linear}
  \left\{
 \begin{array}{ll}
\partial_t \rho^+ -T^+\partial_{x_2}\rho^+ + \frac{E_2}{L} = 0 \\
\partial_t \rho^- -T^-\partial_{x_2}\rho^-  - \frac{E_2}{L} = 0 \\
  E= -\nabla_x V \\
    \displaystyle{-\Delta_x V = \rho^+ + \rho^-}\text{ , }  V=0 \text{ on } x_1=0,L
 \end{array}
\right.
  \end{equation}

With the same method, we obtain the following ordinary differential equation: 
\begin{equation}
 \partial_t \begin{pmatrix} h_k^+(t)\\ h_k^-(t)\end{pmatrix} + 1/L\begin{pmatrix} -2i\pi T^+ k_2- \frac{i2k_2}{\pi\left(\frac{k_1^2}{L} + 4\frac{k_2^2}{L}  \right)} & -\frac{i2 k_2}{\pi\left(\frac{k_1^2}{L} + 4\frac{k_2^2}{L}  \right)}\\ \frac{i2 k_2/L}{\pi\left(\frac{k_1^2}{L} + 4\frac{k_2^2}{L}  \right)} &-2i\pi T^- + k_2 \frac{i2 k_2}{\pi\left(\frac{k_1^2}{L} + 4\frac{k_2^2}{L}  \right)} \end{pmatrix}\begin{pmatrix} h_k^+(t) \\ h_k^-(t)\end{pmatrix}=0
\end{equation}
We consequently have to look for the roots to the polynomial:
$$X^2 +2 i\pi k_2 v^2X -4\pi^2 T^+ T^- k_2^2 +  \frac{4k_2^2}{\frac{k_1^2}{L}  + 4\frac{k_2^2}{L} }(T^+-T^-)$$
In this case, one can check as before that the discriminant is always stricly negative, so that the roots always have a vanishing real part. As a result, we do not find any unstable mode by this method. Note that we only consider this fact as a good and encouraging indication for stability around $\mu^{good}$. Actually, we will never use it when proving nonlinear stability in section \ref{stability}.
\end{rque}

\begin{rque}
We finally mention that the quest for unstable modes seems more difficult in the kinetic case, since one has to deal with the continuous velocity space. A very famous criterion in the Vlasov-Poisson case was given by Penrose \cite{Pen} and rigorously studied later on by Guo and Strauss \cite{GS}.
\end{rque}

\subsection{On the spectrum of the linearized operator around $\mu^{bad}$ on $H^s([0,L]\times \mathbb{R} / \mathbb{Z})$, $ s\geq 0$}
\label{spec}

We assume here the existence of unstable modes for the linearized operator around $\mu^{bad}$, that is in the situation where we have:

\[
\frac{4}{5\pi^2}> \frac{T^+-T^-}{L}
\]

The main tool we use now is a variant of a classical theorem by Weyl, stated for instance in the paper of Guo and Strauss \cite{GS} and proved by Vidav in \cite{Vid}. Basically, it gives informations on the spectrum of some compact perturbation of a linear operator which has no spectrum in the half-plane $\left\{\operatorname{Re} z >0 \right\}$. It entails the existence of a dominant unstable eigenvalue provided the existence of at least one growing mode.
\begin{thm}[Weyl]
\label{Weyl}
 Let Y be a Banach space and $A$ be a linear operator that generates a strongly continuous semigroup on $Y$ such that $\Vert e^{-tA}\Vert \leq M$ for all $t\geq 0$. Let $K$ be a compact operator from $Y$ to $Y$. Then $(A+K)$ generates a strongly continuous semigroup $ e^{-t(A+K)}$ and  $\sigma(-A-K)$ consists of a finite number of eigenvalues of finite multiplicity in $\{\operatorname{Re} \lambda>\delta \}$ for every $\delta>0$. These eigenvalues can be labeled by:
$$\operatorname{Re} \lambda_1\geq \operatorname{Re} \lambda_2\geq ... \operatorname{Re} \lambda_N\geq \delta$$
Furthermore, for any $\gamma>\operatorname{Re} \lambda_1$, there exists some constant $C_\gamma$ such that
\begin{equation}
 \Vert e^{-t(A+K)}\Vert_{Y\rightarrow Y} \leq C_\gamma e^{t \gamma}
\end{equation}
\end{thm}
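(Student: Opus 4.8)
This is the classical Weyl--Vidav perturbation theorem (cf. \cite{Vid}), and my plan is to follow the standard four-step route. First, since $K$ is bounded, the bounded perturbation theorem for $C_0$-semigroups shows that $-(A+K)$ generates a strongly continuous semigroup, with the crude bound $\Vert e^{-t(A+K)}\Vert \le M e^{M\Vert K\Vert t}$; and from $\Vert e^{-tA}\Vert \le M$ together with Hille--Yosida, $(\lambda I+A)^{-1}$ exists and is analytic on the open right half-plane $\{\operatorname{Re}\lambda>0\}$ with $\Vert (\lambda I+A)^{-1}\Vert \le M/\operatorname{Re}\lambda$. To locate $\sigma(-(A+K))$ there I would use the factorisation $\lambda I+A+K=(\lambda I+A)\bigl(I+T(\lambda)\bigr)$ with $T(\lambda):=(\lambda I+A)^{-1}K$: the map $\lambda\mapsto T(\lambda)$ is analytic and takes values in the compact operators, and $\Vert T(\lambda)\Vert<1$ once $\operatorname{Re}\lambda$ is large. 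The analytic Fredholm theorem then makes $(\lambda I+A+K)^{-1}=(I+T(\lambda))^{-1}(\lambda I+A)^{-1}$ meromorphic on $\{\operatorname{Re}\lambda>0\}$ with a discrete pole set, each pole being an eigenvalue of $-(A+K)$ of finite algebraic multiplicity; equivalently, the essential spectrum is unmoved by the compact perturbation, so the spectrum in the right half-plane is purely discrete.

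Next, to get finiteness in each half-plane $\{\operatorname{Re}\lambda>\delta\}$, I would show that $\Vert T(\lambda)\Vert \to 0$ as $|\lambda|\to\infty$ with $\operatorname{Re}\lambda\ge\delta$: write $(\lambda I+A)^{-1}=\int_0^\infty e^{-\lambda t}e^{-tA}\,dt$, approximate $K$ in operator norm by finite-rank operators, and apply a vector-valued Riemann--Lebesgue argument. Then $I+T(\lambda)$ is invertible for $|\lambda|$ large in that region, so the poles of $(\lambda I+A+K)^{-1}$ lying in $\{\operatorname{Re}\lambda\ge\delta\}$ sit in a bounded set and, being isolated, are finite in number. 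Ordering them by decreasing real part produces the list $\operatorname{Re}\lambda_1\ge\cdots\ge\operatorname{Re}\lambda_N\ge\delta$ (the list is of course vacuous if there is no spectrum to the right of $\operatorname{Re}\lambda=\delta$; in the situation of interest a growing mode exists, so $\operatorname{Re}\lambda_1>0$ is the dominant real part).

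For the exponential bound, fix $\gamma>\operatorname{Re}\lambda_1$. I would argue that the \emph{essential} growth bound of $e^{-t(A+K)}$ equals that of $e^{-tA}$ — a compact perturbation of the generator leaves it unchanged — and is $\le \omega_0(e^{-tA})\le 0<\operatorname{Re}\lambda_1$; since the growth bound of a $C_0$-semigroup equals the maximum of its essential growth bound and the largest real part of the point spectrum lying above it (via the spectral mapping theorem for the point spectrum and the essential spectral radius theorem for $e^{-t(A+K)}$ as a single bounded operator), one gets $\omega_0(e^{-t(A+K)})=\operatorname{Re}\lambda_1$, hence $\Vert e^{-t(A+K)}\Vert \le C_\gamma e^{\gamma t}$ for every such $\gamma$. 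An equivalent hands-on route is the inverse-Laplace representation $e^{-t(A+K)}x=\frac{1}{2\pi i}\int_{\operatorname{Re}\lambda=c}e^{\lambda t}(\lambda I+A+K)^{-1}x\,d\lambda$ for $x$ in a dense subspace such as $D((A+K)^2)$ (to gain integrability in $\operatorname{Im}\lambda$), shifting the contour to $\{\operatorname{Re}\lambda=\gamma\}$ (crossing no pole since $\gamma>\operatorname{Re}\lambda_1$) and estimating the integrand there by $\Vert (\lambda I+A)^{-1}\Vert \le M/\gamma$ and $\sup_{\operatorname{Re}\lambda=\gamma}\Vert (I+T(\lambda))^{-1}\Vert<\infty$.

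The hard part will be this last step: converting the qualitative spectral fact ``no spectrum of $-(A+K)$ to the right of $\operatorname{Re}\lambda=\gamma$'' into the quantitative semigroup bound ``$\Vert e^{-t(A+K)}\Vert \lesssim e^{\gamma t}$'', which is genuinely delicate because the spectral mapping theorem can fail for general $C_0$-semigroups. This is precisely where both hypotheses are indispensable: the uniform bound $\Vert e^{-tA}\Vert \le M$ pins the unperturbed essential growth bound at $\le 0$, and the compactness of $K$ keeps it there under perturbation, so the only obstruction to decay at rate $\gamma$ is the finite eigenvalue list $\lambda_1,\dots,\lambda_N$, which the condition $\gamma>\operatorname{Re}\lambda_1$ removes. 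Steps one and two are essentially routine bookkeeping with analytic Fredholm theory; the subtlety is concentrated in the passage from spectrum to semigroup in step three.
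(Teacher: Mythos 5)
The paper offers no proof of this statement: Theorem \ref{Weyl} is quoted as a classical result, with the formulation borrowed from Guo and Strauss \cite{GS} and the proof attributed to Vidav \cite{Vid}, so there is no in-paper argument to compare yours against. On its own merits your outline is, in substance, the standard proof and contains no essential gap. The factorisation $\lambda I+A+K=(\lambda I+A)\bigl(I+T(\lambda)\bigr)$ with $T(\lambda)=(\lambda I+A)^{-1}K$, combined with the Hille--Yosida bound $\Vert(\lambda I+A)^{-1}\Vert\leq M/\operatorname{Re}\lambda$ and the analytic Fredholm theorem, correctly yields discreteness and finite algebraic multiplicity in $\{\operatorname{Re}\lambda>0\}$; the decay of $\Vert T(\lambda)\Vert$ as $\vert\lambda\vert\to\infty$ in $\{\operatorname{Re}\lambda\geq\delta\}$ gives finiteness of the eigenvalue list; and you correctly identify the genuinely delicate step --- converting spectral location into a semigroup bound --- and resolve it the right way, via invariance of the essential growth bound under compact perturbation of the generator together with $\omega_0=\max\left(\omega_{ess},\ \sup\operatorname{Re}\sigma_{pt}\right)$, rather than leaning on a spectral mapping theorem that fails for general $C_0$-semigroups. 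Two minor remarks. First, your Riemann--Lebesgue step approximates $K$ in norm by finite-rank operators, which in a general Banach space requires the approximation property; it is cleaner (and fully general) to write $\Vert(\lambda I+A)^{-1}K\Vert=\sup_{y\in K(B_Y)}\Vert(\lambda I+A)^{-1}y\Vert$ and upgrade the pointwise decay of $(\lambda I+A)^{-1}y=\int_0^\infty e^{-\lambda t}e^{-tA}y\,dt$ to uniform decay on the relatively compact set $K(B_Y)$ using the uniform bound $M/\delta$. In the paper's application $Y$ is a closed subspace of a product of Sobolev spaces, hence a Hilbert space, so your version also suffices there. Second, the alternative contour-shifting route you mention is precisely where the real analytic difficulties live (integrability along vertical lines, density arguments); since your essential-growth-bound argument is complete, you should present that one and drop the Laplace-inversion sketch or clearly mark it as heuristic.
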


\begin{coro}
\label{spectral}
Let $s\geq0$ and 
$$Y=\{y_1,y_2 \in H^s([0,L]\times \mathbb{R} / \mathbb{Z})^2, \int (y_1+y_2)dx=0\}.$$
Let $M$ be the linear operator defined by:
\begin{equation} g=\begin{pmatrix} g^+ \\ g^-\end{pmatrix} \in Y \mapsto M\begin{pmatrix} g^+ \\ g^-\end{pmatrix} =\begin{pmatrix} -T^+\partial_{x_2}g^+- \frac{E_2}{L}  \\-T^-\partial_{x_2}g^- + \frac{E_2}{L} \end{pmatrix}\end{equation}
with $E_2 =-\partial_{x_2}V $, $-\Delta V =(g^+ +g^-)$ and with $V=0$ on  $x_1=0,L$.

Then there exists an eigenvalue $\lambda$ with a non-vanishing and maximal real part associated to a $C^\infty$ eigenvector. Furthermore for any $\gamma > \operatorname{Re} \lambda$, there is a constant $C(\gamma,s)$ such that for all $t\geq0$:
\begin{equation}
 \Vert e^{-tM}\Vert_{H^s\rightarrow H^s} \leq C(\gamma,s) e^{t \gamma}
\end{equation}
\end{coro}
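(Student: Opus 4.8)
The plan is to apply Weyl's theorem (Theorem~\ref{Weyl}) to a splitting $M=\mathcal{A}+\mathcal{K}$ of the linearized operator into its transport part and its electric-coupling part, and then to feed in the explicit growing mode constructed in the previous subsection so as to guarantee that the dominant eigenvalue is genuinely unstable.

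First I would set $\mathcal{A}(g^+,g^-)=(-T^+\partial_{x_2}g^+,\,-T^-\partial_{x_2}g^-)$ and $\mathcal{K}(g^+,g^-)=(-E_2/L,\,E_2/L)$, where $E_2=-\partial_{x_2}V$ with $-\Delta V=g^++g^-$ and $V=0$ on $x_1=0,L$, so that $M=\mathcal{A}+\mathcal{K}$ on $Y$. The semigroup $e^{-t\mathcal{A}}$ is nothing but translation in the periodic variable $x_2$ at the speeds $T^\pm$: it is strongly continuous on $Y$, it preserves the constraint $\int(g^++g^-)\,dx=0$, and it acts isometrically on $H^s$, so that $\|e^{-t\mathcal{A}}\|_{Y\to Y}=1$ for all $t\ge0$. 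For $\mathcal{K}$, elliptic regularity for the Dirichlet problem on the (smoothly bounded) strip $[0,L]\times\mathbb{R}/\mathbb{Z}$ gives $V\in H^{s+2}$ when $g^++g^-\in H^s$, hence $E_2\in H^{s+1}$; thus $\mathcal{K}$ maps $Y$ boundedly into $H^{s+1}\times H^{s+1}$, and since the domain is bounded the embedding $H^{s+1}\hookrightarrow H^s$ is compact, so $\mathcal{K}\colon Y\to Y$ is compact (its range clearly lies in $Y$). Theorem~\ref{Weyl} then yields that $M$ generates a strongly continuous semigroup $e^{-tM}$ on $Y$, that for each $\delta>0$ the part of $\sigma(-M)$ in $\{\operatorname{Re}\lambda>\delta\}$ consists of finitely many eigenvalues of finite multiplicity, and that $\|e^{-tM}\|_{H^s\to H^s}\le C(\gamma,s)\,e^{t\gamma}$ for every $\gamma$ larger than the largest of their real parts.

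Next I would check that this largest real part is strictly positive. Here the explicit computation of the previous subsection enters: under the standing hypothesis $\tfrac{4}{5\pi^2}>\tfrac{T^+-T^-}{L}$ there are $k_1,k_2\in\mathbb{Z}^*$ for which the discriminant $\Delta$ is positive, so the matrix $N_{k_1,k_2}$ has an eigenvalue $\nu$ with $\operatorname{Re}\nu=-\sqrt{\Delta}/2<0$; picking an eigenvector $w\in\mathbb{C}^2\setminus\{0\}$, the function $(w^+g_k,\,w^-g_k)$, which lies in $Y$ because $\int g_k\,dx=0$ (as $k_2\ne0$), is an eigenvector of $-M$ with eigenvalue $-\nu/L$, of real part $\sqrt{\Delta}/(2L)>0$. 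Hence the dominant eigenvalue $\lambda$ produced by Weyl's theorem satisfies $\operatorname{Re}\lambda\ge\sqrt{\Delta}/(2L)>0$, so it has non-vanishing and maximal real part, and the growth bound above holds for every $\gamma>\operatorname{Re}\lambda$.

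It remains to show the corresponding eigenvector is $C^\infty$. Let $\phi=(\phi^+,\phi^-)\in Y$ with $-M\phi=\lambda\phi$ and set $\mu=-\lambda$, so $\mu\notin i\mathbb{R}$. Since $M$ commutes with translations in $x_2$ it preserves each $x_2$-Fourier mode; in the $k_2$-th mode the relations $T^\pm\partial_{x_2}\phi^\pm\pm\tfrac1L E_2=\mu\phi^\pm$ become algebraic and, because $\mu-2i\pi k_2T^\pm/L\ne0$, determine $\phi^\pm_{k_2}$ as bounded-in-$k_2$ scalar multiples of $V_{k_2}$ (and force the $k_2=0$ mode to vanish). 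Substituting into $-\Delta V=\phi^++\phi^-$ turns the Poisson equation, in each mode, into a one-dimensional Dirichlet problem $-V_{k_2}''+\big((2\pi k_2/L)^2-m(k_2)\big)V_{k_2}=0$, $V_{k_2}(0)=V_{k_2}(L)=0$, with $m(k_2)$ bounded; this has a nonzero solution only when $(2\pi k_2/L)^2-m(k_2)=-(k_1\pi/L)^2$ for some integer $k_1\ge1$, which, the left-hand side being unbounded in $k_2$, happens for only finitely many pairs $(k_1,k_2)$. Hence $\phi$ is, componentwise, a finite linear combination of the smooth functions $\sin(k_1\pi x_1/L)e^{2i\pi k_2x_2/L}$, so $\phi\in C^\infty$ (this also re-derives the finiteness of the unstable spectrum). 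I expect the only parts requiring genuine care to be the compactness of $\mathcal{K}$ (standard elliptic regularity plus Rellich) and, more so, this last reduction showing that unstable eigenfunctions are finite trigonometric polynomials; the remaining ingredients are direct applications of Theorem~\ref{Weyl} and of the previous subsection.
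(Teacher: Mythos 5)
Your treatment of existence of the dominant eigenvalue and of the semigroup bound is essentially the paper's own: the same splitting $M=\mathcal{A}+\mathcal{K}$ into the transport isometry and the compact electric coupling, the same appeal to Theorem \ref{Weyl}, and the same injection of the explicit mode $g_k$ from the previous subsection to guarantee that the maximal real part is strictly positive. Where you genuinely depart from the paper is in the smoothness of the eigenvector. The paper proves this in Lemma \ref{reg} by an elliptic bootstrap: starting from $\rho\in L^2$, it combines the eigenvalue equation with the two derivatives gained by $\Delta^{-1}$ (plus an integration by parts in $x_2$ to control $\partial_{x_1}\rho$) to show inductively that $\rho\in H^k$ for every $k$. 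You instead exploit translation invariance in $x_2$: projecting onto each Fourier mode, you solve for $\phi^\pm_{k_2}$ as bounded multiples of $V_{k_2}$ (legitimate because $\operatorname{Re}\lambda\neq 0$ makes the transport symbol $\lambda+2i\pi k_2 T^\pm/L$ invertible), reduce the Poisson equation to a one-dimensional constant-coefficient Dirichlet problem on $[0,L]$, and observe that it admits a nonzero solution only when $(2\pi k_2/L)^2-m(k_2)$ equals a real negative Dirichlet eigenvalue $-(k_1\pi/L)^2$, which boundedness of $m$ rules out for all but finitely many $k_2$. This yields a conclusion stronger than the paper's (every unstable eigenfunction is a finite trigonometric polynomial, and the finiteness of the unstable spectrum is recovered for free), at the price of leaning on the exact product structure of the domain and the explicit form of the equilibrium; the paper's bootstrap is more robust and is the kind of argument that would survive a perturbation of the geometry or of $\mu^{bad}$. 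Both routes are correct, and your reduction is carried out with the right non-vanishing checks (the $k_2=0$ mode is killed by $\lambda\neq0$, and $g_k\in Y$ since $k_2\neq0$).
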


\begin{proof}
The linear operator $A$, defined by 
\[
A: g\mapsto \begin{pmatrix}-T^+\partial_{x_2} g^+ \\-T^-\partial_{x_2} g^-\end{pmatrix}
\]
 is clearly an isometry on $Y$ (indeed we know how to explicitly solve the semi-group). The operator $K$ is defined by 
\[
K: g \mapsto \begin{pmatrix} -\frac{E_2}{L}  \\ \frac{E_2}{L}  \end{pmatrix}
\]
 
 This operator is compact on $Y$ thanks to standard elliptic estimates.

Moreover, we have shown in the last paragraph the existence of an unstable eigenfunction for the linearized operator that belongs to any $H^s$. We can therefore apply Weyl's theorem which gives the existence of an eigenfunction associated to an eigenvalue with a non-vanishing and maximal real part.
At last, the estimate in the corollary follows directly from the estimate given in Weyl's theorem.

\end{proof}

In the following, we denote for any $h \in L^2$ with $\int h dx=0$, $\Delta^{-1} h$ the unique solution u in $H^1$ to the problem:
$$
  \left\{
 \begin{array}{ll}
-\Delta u = h \\
 u=0 \text{ on } x_1=0,L \\
 \end{array}
\right.
$$

We give now a lemma which tells us that any eigenvector associated to a non vanishing eigenvalue for the linearized operator any $L^2$ actually has $\mathcal{C}^\infty$ regularity.

\begin{lem}
\label{reg}
Let $\rho=(\rho^+, \rho^-)\in (L^2)^2$ with $\int (\rho^+ + \rho^-)dx=0$ and $\lambda\neq 0$ such that:
\begin{eqnarray*}
 -T^+\partial_{x_2} \rho^+ - \frac{1}{L}\partial_{x_2}\Delta^{-1}(\rho^++\rho^-)&=& \lambda \rho^+ \\ -T^-\partial_{x_2} \rho^- -
 \frac{1}{L}\partial_{x_2}\Delta^{-1}(\rho^++\rho^-)&=& \lambda \rho^-
\end{eqnarray*}
then $(\rho^+, \rho^-)\in \mathcal{C}^\infty([0,L]\times\mathbb{R}/\mathbb{Z})$
\end{lem}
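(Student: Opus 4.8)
The plan is to diagonalize the $x_2$-dependence by Fourier series, turning the eigenvalue system into a family of ordinary differential equations in $x_1$. I would write $\rho^\pm(x_1,x_2)=\sum_{n\in\mathbb{Z}}\rho_n^\pm(x_1)\,e^{2i\pi n x_2}$ and $\phi:=\Delta^{-1}(\rho^++\rho^-)=\sum_{n}\phi_n(x_1)\,e^{2i\pi n x_2}$; since $\rho^++\rho^-\in L^2$ has zero average, $\Delta^{-1}$ is well defined, each $\rho_n^\pm$ and each $\phi_n$ lies in $L^2(0,L)$, and $\phi_n\in H^2(0,L)\cap H^1_0(0,L)$ solves $-\phi_n''+(2\pi n)^2\phi_n=\rho_n^++\rho_n^-$ with $\phi_n(0)=\phi_n(L)=0$. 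Projecting the two equations of the statement onto the $n$-th mode replaces $\partial_{x_2}$ by multiplication by $\kappa_n:=2i\pi n$ and yields
\[
(\lambda+T^+\kappa_n)\,\rho_n^+=-\frac{\kappa_n}{L}\,\phi_n,\qquad (\lambda+T^-\kappa_n)\,\rho_n^-=-\frac{\kappa_n}{L}\,\phi_n .
\]
For $n=0$ this forces $\rho_0^\pm=0$ because $\lambda\neq0$. For $n\neq0$, since $T^+\neq T^-$ at most one of the scalars $\lambda+T^\pm\kappa_n$ can vanish, and only if $\operatorname{Re}\lambda=0$; if one of them does vanish, that relation forces $\phi_n=0$, and then the other relation together with the Poisson identity forces the whole $n$-th mode to vanish. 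There are at most two such exceptional frequencies.

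For every remaining $n$ both factors $\lambda+T^\pm\kappa_n$ are nonzero, so $\rho_n^\pm=\alpha_n^\pm\phi_n$ with constants $\alpha_n^\pm:=-\frac{\kappa_n}{L(\lambda+T^\pm\kappa_n)}$. Substituting into $-\phi_n''+(2\pi n)^2\phi_n=\rho_n^++\rho_n^-$ gives the constant-coefficient scalar problem
\[
-\phi_n''+\mu_n\,\phi_n=0,\quad \phi_n(0)=\phi_n(L)=0,\qquad \mu_n:=(2\pi n)^2-c_n,\ \ c_n:=-\frac{\kappa_n}{L}\Bigl(\frac{1}{\lambda+T^+\kappa_n}+\frac{1}{\lambda+T^-\kappa_n}\Bigr).
\]
Its solutions are explicit combinations of exponentials, hence $\mathcal{C}^\infty([0,L])$, so every $\rho_n^\pm=\alpha_n^\pm\phi_n$ is smooth in $x_1$. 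Such a Dirichlet problem admits a nonzero solution only when $\mu_n=-(k\pi/L)^2$ for some integer $k\geq1$, i.e. only when $\mu_n$ is a negative real number. Now the elementary bound $|\kappa_n/(\lambda+T^\pm\kappa_n)|\leq 2/T^\pm$, valid as soon as $|\kappa_n|\geq 2|\lambda|/T^\pm$, shows that $c_n$ is bounded, hence $\operatorname{Re}\mu_n=(2\pi n)^2-\operatorname{Re}c_n\to+\infty$; in particular $\mu_n\notin(-\infty,0)$ for $|n|$ large, so $\phi_n=0$ and therefore $\rho_n^\pm=0$ for all $|n|\geq N_0$.

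Collecting the three cases, $\rho^\pm=\sum_{|n|<N_0}\rho_n^\pm(x_1)\,e^{2i\pi n x_2}$ is a finite trigonometric sum in $x_2$ whose coefficients are $\mathcal{C}^\infty([0,L])$ functions of $x_1$, which gives $(\rho^+,\rho^-)\in\mathcal{C}^\infty([0,L]\times\mathbb{R}/\mathbb{Z})$. The conceptual point, and the reason a naive elliptic bootstrap cannot close, is that the transport term $\partial_{x_2}$ costs one $x_2$-derivative at each step; this loss is bypassed not by recovering regularity frequency by frequency but by the rigidity of the reduced ODE, which annihilates all but finitely many $x_2$-modes. The parts that require genuine care are the bookkeeping of the exceptional frequencies where $\lambda+T^\pm\kappa_n=0$ and the uniform estimate guaranteeing $\operatorname{Re}\mu_n\to+\infty$.
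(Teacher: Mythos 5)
Your argument is correct, and it takes a genuinely different route from the paper. The paper proves the lemma by a Sobolev bootstrap: it first reads $\partial_{x_2}\rho^\pm\in L^2$ directly off the equation (since $\partial_{x_2}\Delta^{-1}(\rho^++\rho^-)\in L^2$ by elliptic regularity), then controls $\partial_{x_1}\rho^\pm$ by differentiating the equation in $x_1$, multiplying by $\partial_{x_1}\rho^+$ and integrating, so that the transport term drops out by $x_2$-periodicity and Cauchy--Schwarz gives $\vert\lambda\vert\,\Vert\partial_{x_1}\rho^+\Vert_{L^2}\leq \frac1L\Vert\partial_{x_1}\partial_{x_2}\Delta^{-1}(\rho^++\rho^-)\Vert_{L^2}$; an induction on $k$ then yields $\rho\in H^k$ for every $k$. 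You instead exploit the fact that the linearized operator has constant coefficients in $x_2$ (and that the background profile is affine in $x_1$), diagonalize by Fourier series in $x_2$, and reduce each mode to a constant-coefficient Dirichlet ODE in $x_1$ whose solvability forces $\mu_n=-(k\pi/L)^2$; since $\operatorname{Re}\mu_n\to+\infty$, all but finitely many modes vanish. Your bookkeeping of the exceptional frequencies where $\lambda+T^{\pm}\kappa_n=0$ is handled correctly (the mode is annihilated there), and the uniform bound on $c_n$ is right. What your approach buys is a much stronger conclusion than $\mathcal{C}^\infty$: every eigenfunction with $\lambda\neq0$ is a finite trigonometric polynomial, which also reconnects cleanly with the explicit mode computation of Section 4.1. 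What the paper's approach buys is robustness: the bootstrap only uses that the zeroth-order coupling is $\partial_{x_2}\Delta^{-1}$ (a smoothing operator) and would survive replacing the linear equilibrium by a general profile $\Phi(x_1)$, where your exact diagonalization in $x_1$ would no longer close.
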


\begin{proof}
 The principle of the proof is to show by recursion that $\rho \in H^k$, for any $k\in\mathbb{N}^*$.

For $k=1$, we can observe, thanks to elliptic estimates, that $\partial_{x_2}\rho \in L^2$. Indeed, we have the identity: 
\begin{equation}
\label{ident}
 -T^+ \partial_{x_2} \rho^+= \frac{1}{L}\partial_{x_2}\Delta^{-1}(\rho^++\rho^-) + \lambda \rho^+ 
\end{equation}
Hence, $\partial_{x_2} \rho^+ \in L^2$. Likewise, $\partial_{x_2} \rho^- \in L^2$.

We can apply the differential operator $\partial_{x_1}$ to the equation satisfied by $\rho^+$, which entails: 
\begin{eqnarray*}
 -T^+ \partial_{x_1}\partial_{x_2} \rho^+ - \frac{1}{L}\partial_{x_1}\partial_{x_2}\Delta^{-1}(\rho^++\rho^-)&=& \lambda \partial_{x_1}\rho^+ 
\end{eqnarray*}

Then we multiply by $\partial_{x_1}\rho^+$ and integrate with respect to $x$:

\begin{eqnarray*}
\lambda \Vert \partial_{x_1} \rho^+\Vert_{L^2}^2 &=&  \int -T^+ \partial_{x_1}\partial_{x_2} \rho^+ \partial_{x_1}\rho^+ dx -\frac{1}{L}\int \partial_{x_1}\partial_{x_2}\Delta^{-1}(\rho^++\rho^-)\partial_{x_1}\rho^+ dx
\end{eqnarray*}
Thanks to the periodicity with respect to $x_2$, we get: 

$$\int \partial_{x_1}\partial_{x_2} \rho^+ \partial_{x_1}\rho^+ dx=1/2\int \partial_{x_2}\Big( \partial_{x_1}\rho^+\Big)^2dx=0$$

Then using Cauchy-Schwarz inequality:
\begin{eqnarray*}
\lambda \Vert \partial_{x_1} \rho^+\Vert_{L^2}^2 \leq \frac{1}{L} \Vert \partial_{x_1}\partial_{x_2} \Delta^{-1}(\rho^+ + \rho^-)\Vert_{L^2}\Vert \partial_{x_1} \rho^+\Vert_{L^2}
\end{eqnarray*}

As a result we showed that:
\begin{equation}
 \lambda^2\Vert \partial_{x_1} \rho^+\Vert_{L^2}  \leq \frac{1}{L} \Vert \partial_{x_1}\partial_{x_2} \Delta^{-1}(\rho^+ + \rho^-)\Vert_{L^2}
\end{equation}
By standard elliptic estimates the right-hand side is finite since $\rho^+$ et $\rho^-$ belong to $L^2$. As a result we have proved $\rho \in H^1$.

We can then conclude by recursion. Let us assume that $\rho \in H^{k}$, for some $k\in \mathbb{N}^*$; we prove that $\rho \in H^{k+1}$. 

Let $\alpha, \beta \in \mathbb{N}$ such that $\alpha +\beta =k$. We set $\partial_k = \partial^\alpha_{x_1}  \partial^\beta_{x_2} $. Then $\partial_k \rho$ belongs to $L^2$ and satisfies the equation:
\begin{eqnarray*}
 -T^+\partial_{x_2} \partial_k \rho^+ - \frac{1}{L} \partial_k \partial_{x_2}\Delta^{-1}(\rho^++\rho^-)&=& \lambda  \partial_k \rho^+ \\ -T^-\partial_{x_2}  \partial_k \rho^- -
 \frac{1}{L} \partial_k \partial_{x_2}\Delta^{-1}(\rho^++\rho^-)&=& \lambda  \partial_k \rho^-
\end{eqnarray*}

By elliptic regularity, $ \partial_k \partial_{x_2}\Delta^{-1}(\rho^++\rho^-) \in H^{1}$. Thus, we are in the same case as for $L^2 \rightarrow H^1$, which entails that $\rho \in H^{k+1}$.

\end{proof}

\section{On nonlinear stability}
\label{stability}

Let $\rho=\begin{pmatrix}\rho^+ \\ \rho^-\end{pmatrix}$ a solution to the nonlinear transport equation  (\ref{model}), that we recall here:
\begin{equation*}
  \left\{
 \begin{array}{ll}
\partial_t \rho^+ -T^+\partial_{x_2}\rho^+ + E^\perp.\nabla_x \rho^+ = 0 \\
\partial_t \rho^-  -T^-\partial_{x_2}\rho^+ + E^{\perp}.\nabla_x \rho^- = 0 \\
  E= -\nabla_x V \\
    \displaystyle{-\Delta_x V = \rho^+ + \rho^--1} \\
V=0 \text{  on } x_1=0,L \\
(\rho^+, \rho^-)_{\vert t=0}= (\rho^+_0, \rho^-_0) \text{ with } \int \rho^+_0 + \rho^-_0 = 1
 \end{array}
\right.
  \end{equation*} 

We begin with a very simple observation in the limit case $T=T^+=T^-$. In this situation, setting $\tilde\rho=\rho^+ + \rho^-$, $\rho$ satisfies the usual $2D$ Euler equation in vorticity form, with some linear drift term:
\[
\partial_t \tilde\rho + E^\perp.\nabla_x \tilde\rho - T \partial_{x_2} \tilde\rho=0
\]
\[
E^\perp = \nabla^\perp \Delta^{-1}(\tilde\rho-1)
\]
We investigate stability around the steady state $\tilde \mu= \mu^+ + \mu^- = 1$. It is well-known that any $L^p$ norm of the "vorticity" $\tilde \rho -1$ is non-increasing, that is:
\begin{equation}
\Vert \tilde\rho(t)-1 \Vert_{L^p} \leq \Vert \tilde\rho(0)-1 \Vert_{L^p}
\end{equation}
which clearly entails nonlinear $L^p$ stability.

The idea in the general case $T^+ >T^-$ in order to show nonlinear stability is to obtain some similar nice energy estimate.

\begin{thm}
\label{stab}For any inital data $\rho_0 \in L^\infty$ with $\int (\rho^+_0 + \rho^-_0 )dx =1$, the solution $\rho$ to (\ref{model}) satisfies the following statements.

\begin{itemize}
\item Around the "good-curvature" steady state, the following functional is non-increasing:
 \begin{equation}
 \label{E}
 \mathcal{E}(t)=\Vert \rho-\mu^{good} \Vert_{L^2}^2 + \frac{1}{L(T^+-T^-)}\int \vert \nabla V\vert ^2 dx \leq \mathcal{E}(0)
\end{equation}
with  $\Vert \rho-\mu \Vert_{L^2}^2=\Vert \rho^+-\mu^+ \Vert_{L^2}^2+\Vert \rho^--\mu^- \Vert_{L^2}^2$ and $\nabla V = \nabla \Delta^{-1} (\rho^+ + \rho^- -1)$.

\item Around the "bad-curvature" steady state, the following functional is non-increasing:
 \begin{equation}
 \mathcal{F}(t)=\Vert \rho-\mu^{bad} \Vert_{L^2}^2 - \frac{1}{L(T^+-T^-)}\int \vert \nabla V\vert ^2 dx \leq  \mathcal{F}(0)
\end{equation}

\end{itemize}

\end{thm}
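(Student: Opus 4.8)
The plan is to show that, along smooth solutions, $\mathcal{E}$ and $\mathcal{F}$ are in fact exact conservation laws — hence certainly non-increasing — and to recover the stated inequality for general $L^\infty$ data by approximation. The computation is a direct energy estimate paralleling the conservation of $\Vert\omega\Vert_{L^2}$ for $2D$ Euler, but keeping track of the extra \emph{linear} coupling produced by the drift acting on the affine profiles $\mu^{good},\mu^{bad}$. Write $\sigma^\pm=\rho^\pm-\mu^{good,\pm}$. Since $\mu^{good,\pm}$ is affine in $x_1$ and independent of $x_2$, the potential is unchanged ($-\Delta V=\sigma^++\sigma^-$, $V=0$ on $x_1=0,L$) and
\[
\partial_t\sigma^\pm-T^\pm\partial_{x_2}\sigma^\pm+E^\perp\cdot\nabla_x\sigma^\pm\pm\frac{1}{L}E_2=0,
\]
the $\pm E_2/L$ term coming from $E^\perp\cdot\nabla_x\mu^{good,\pm}$ — exactly the coupling already present in (\ref{linear}). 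Three structural facts drive everything: $-T^\pm\partial_{x_2}$ is a pure transport along $e_2$ and contributes nothing to $L^2$-type quantities by $x_2$-periodicity; the advection field $E^\perp$ is divergence free and, thanks to the perfect-conductor condition $V=0$ on $x_1=0,L$ (which forces $E_2=-\partial_{x_2}V=0$ there), tangent to the boundary; and $E^\perp\cdot\nabla_x V=0$ pointwise.

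For smooth solutions I would then compute the two building blocks. Multiplying the $\sigma^\pm$ equation by $\sigma^\pm$ and integrating, the transport and self-advection terms drop out, leaving $\frac{d}{dt}\Vert\sigma^\pm\Vert_{L^2}^2=\mp\frac{2}{L}\int E_2\sigma^\pm\,dx$. For the potential energy, using $\int|\nabla V|^2=\int V(-\Delta V)=\int V(\sigma^++\sigma^-)$ one gets $\frac{d}{dt}\int|\nabla V|^2=2\int V\,\partial_t(\sigma^++\sigma^-)\,dx$; in $\partial_t(\sigma^++\sigma^-)$ the $\pm E_2/L$ terms cancel, the advection term integrates to zero after one integration by parts (using divergence-freeness, tangency and $E^\perp\cdot\nabla V=0$), and the transport terms produce $2T^+\int E_2\sigma^++2T^-\int E_2\sigma^-$ (integrate by parts in $x_2$, using $\partial_{x_2}V=-E_2$).

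Now form $\mathcal{E}=\Vert\rho-\mu^{good}\Vert_{L^2}^2+c\int|\nabla V|^2$ and pick the weight $c>0$ precisely so that the coefficients of $\int E_2\sigma^+$ and of $\int E_2\sigma^-$ in $\frac{d}{dt}\mathcal{E}$ coincide (this requirement pins down $c$ and the constant in the statement). With that choice $\frac{d}{dt}\mathcal{E}$ is a constant multiple of $\int E_2(\sigma^++\sigma^-)\,dx$, which vanishes: $\int E_2(\sigma^++\sigma^-)=-\int\partial_{x_2}V\,\Delta V\,dx=0$, again by $x_2$-periodicity together with $\partial_{x_2}V=0$ on $x_1=0,L$. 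Hence $\mathcal{E}$ is conserved along smooth solutions; for $\rho_0\in L^\infty$ one regularizes the data, runs the argument on the smooth solutions, and passes to the limit — $\Vert\rho-\mu^{good}\Vert_{L^2}^2$ is weakly lower semicontinuous while $\int|\nabla V_n|^2\to\int|\nabla V|^2$ strongly by elliptic compactness — giving $\mathcal{E}(t)\le\mathcal{E}(0)$. The bad-curvature case is identical: the coupling terms flip sign as in (\ref{lin}), the same bookkeeping forces the minus sign in front of $\int|\nabla V|^2$, and $\frac{d}{dt}\mathcal{F}$ is again proportional to $\int E_2(\sigma^++\sigma^-)=0$.

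The only genuinely delicate points are: (a) verifying that every boundary contribution vanishes, which works precisely because the single condition $V=0$ on $x_1=0,L$ simultaneously makes $E^\perp$ tangent to the boundary, makes the $E^\perp\cdot\nabla$ terms integrate to zero, and makes $\int E_2(\sigma^++\sigma^-)=0$; and (b) justifying the identity for non-smooth solutions through the regularization/weak-limit argument rather than by manipulating the weak solution directly. (Turning this monotonicity into genuine $L^2$ stability — and producing the threshold $\frac{T^+-T^-}{L}>\frac{1}{\pi^2}$ for $\mu^{bad}$ — is then a separate coercivity estimate on $\mathcal{F}$ using the Poincaré/spectral gap of $-\Delta$ on $[0,L]\times\mathbb{R}/L\mathbb{Z}$, which is not needed for the present statement.)
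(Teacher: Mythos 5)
Your proposal is essentially the paper's own proof, reorganized. The paper computes $\frac{d}{dt}\Vert\rho-\mu\Vert_{L^2}^2$ first, reduces it to a single term $\int E_2(\rho^+-\mu^+)\,dx$ via the identity $\int E_2\,\Delta V\,dx=0$ (its Lemma \ref{growth}, which is exactly your observation that $\int E_2(\sigma^++\sigma^-)\,dx=0$), and then substitutes the transport equations back into $\int V\partial_{x_2}\sigma^{\pm}\,dx$ to recognize a multiple of $\frac{d}{dt}\int|\nabla V|^2\,dx$; you instead differentiate $\int|\nabla V|^2\,dx$ directly and balance the coefficients of $\int E_2\sigma^{+}$ and $\int E_2\sigma^{-}$. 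These are the same computation read in opposite directions, resting on the same three cancellations ($x_2$-periodicity, $\operatorname{div}E^\perp=0$ together with $E_2=0$ on $x_1=0,L$, and $E^\perp\cdot\nabla V=0$), and the same regularization step for $L^\infty$ data. Your version is arguably cleaner and makes the sign flip between $\mathcal{E}$ and $\mathcal{F}$ transparent.

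One caveat: if you actually carry out your balancing condition you do not recover the constant in the statement. From $\frac{d}{dt}\Vert\sigma^{\pm}\Vert_{L^2}^2=\mp\frac{2}{L}\int E_2\sigma^{\pm}\,dx$ and $\frac{d}{dt}\int|\nabla V|^2\,dx=2T^+\int E_2\sigma^++2T^-\int E_2\sigma^-$, requiring equal coefficients gives $c=\frac{2}{L(T^+-T^-)}$, not $\frac{1}{L(T^+-T^-)}$. The discrepancy is traceable to the paper's own derivation, which writes $\frac{d}{dt}\Vert\rho-\mu\Vert_{L^2}^2$ where $\frac{1}{2}\frac{d}{dt}\Vert\rho-\mu\Vert_{L^2}^2$ is meant after multiplying the equation by $\rho-\mu$; the lost factor of $2$ propagates into the stated weight (and, downstream, into the numerical threshold of Corollary \ref{stabi}, which would become $\frac{2}{\pi^2}$). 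This does not affect monotonicity or any qualitative conclusion, since only the sign of the weight matters, but your parenthetical claim that the balancing ``pins down\dots the constant in the statement'' is not literally correct as written — it pins down twice that constant.
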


\begin{rque}
We observe that in the functionals, the first term corresponds to the enstrophy in fluid mechanics (in a modulated form adapted to our needs). The second term can be interpreted as the kinetic energy of the fluid (whereas from the plasma physics point of view, this is the electric energy).
\end{rque}

As an immediate consequence of this theorem, we obtain $L^2$ stability, in the "good-curvature" side, but also in the "bad-curvature" side, for large enough temperature gradients, like for the linearized equations.

\begin{coro}
\label{stabi}
 The equilibrium  $\mu^{good}$ is nonlinearly stable with respect to the $L^2$ norm.
 
 If the temperature gradient $ \frac{T^+- T^-}{L}$ satisfies
 \begin{equation}
 \frac{T^+- T^-}{L} > \frac{1}{\pi^2}
\end{equation}
 
 then the equilibrium  $\mu^{bad}$ is nonlinearly stable with respect to the $L^2$ norm.
\end{coro}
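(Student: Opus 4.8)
The plan is to deduce the corollary directly from Theorem~\ref{stab}, the only extra ingredient being an elliptic (Poincar\'e) estimate on the electric energy. Fix $\eta>0$ and let $\rho$ be a solution of (\ref{model}) with $\rho_0\in L^\infty$ and $\int(\rho_0^++\rho_0^-)=1$ (the class in which Theorem~\ref{stab} and the global solutions above are available). It suffices to produce a constant $C=C(L,T^+,T^-)$, independent of $\rho_0$ and of $t$, such that $\Vert \rho(t)-\mu\Vert_{L^2}^2\le C\,\Vert \rho_0-\mu\Vert_{L^2}^2$ for all $t\ge 0$; then $\delta:=\eta\,C^{-1/2}$ meets the requirement in the definition of $L^2$ stability. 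Throughout I use that the lowest eigenvalue of $-\Delta$ on $[0,L]\times\mathbb{R}/L\mathbb{Z}$ with Dirichlet condition on $x_1=0,L$ equals $\pi^2/L^2$ (eigenfunction $\sin(\pi x_1/L)$), so that
\[
\int|\nabla V|^2=\big\langle \rho^++\rho^--1,(-\Delta)^{-1}(\rho^++\rho^--1)\big\rangle\le\frac{L^2}{\pi^2}\,\big\Vert \rho^++\rho^--1\big\Vert_{L^2}^2,
\]
and that $\rho^++\rho^--1=(\rho^+-\mu^{+})+(\rho^--\mu^{-})$ for either equilibrium $\mu=(\mu^+,\mu^-)$, so the right-hand side is comparable to $\Vert \rho-\mu\Vert_{L^2}^2$.

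For $\mu^{good}$ no condition on the temperature gradient is needed. Since the electric energy $\frac{1}{L(T^+-T^-)}\int|\nabla V|^2$ is non-negative, Theorem~\ref{stab} gives $\Vert \rho(t)-\mu^{good}\Vert_{L^2}^2\le\mathcal{E}(t)\le\mathcal{E}(0)$, and the Poincar\'e bound applied at $t=0$ yields $\mathcal{E}(0)\le C\,\Vert \rho_0-\mu^{good}\Vert_{L^2}^2$ with $C$ finite. This already proves the first assertion.

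For $\mu^{bad}$ the electric energy enters $\mathcal{F}$ with a minus sign, so coercivity of $\mathcal{F}$ must be gained from the smallness of $\frac{1}{L(T^+-T^-)}\cdot\frac{L^2}{\pi^2}$. One has $\mathcal{F}(0)\le\Vert \rho_0-\mu^{bad}\Vert_{L^2}^2$ trivially (a non-negative term is dropped), hence by Theorem~\ref{stab} and the Poincar\'e bound at time $t$,
\[
\Vert \rho(t)-\mu^{bad}\Vert_{L^2}^2=\mathcal{F}(t)+\frac{1}{L(T^+-T^-)}\int|\nabla V(t)|^2\le \Vert \rho_0-\mu^{bad}\Vert_{L^2}^2+\frac{L}{\pi^2(T^+-T^-)}\big\Vert \rho^+(t)+\rho^-(t)-1\big\Vert_{L^2}^2.
\]
Estimating $\big\Vert \rho^++\rho^--1\big\Vert_{L^2}^2$ by $\Vert \rho(t)-\mu^{bad}\Vert_{L^2}^2$ and absorbing, the last term can be moved to the left-hand side precisely when $\frac{T^+-T^-}{L}>\frac{1}{\pi^2}$, which leaves $\Vert \rho(t)-\mu^{bad}\Vert_{L^2}^2\le C\,\Vert \rho_0-\mu^{bad}\Vert_{L^2}^2$ with $C$ finite; this is the second assertion.

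The only place where any real care is required is the comparison between $\Vert \rho^++\rho^--1\Vert_{L^2}$ and $\Vert \rho-\mu^{bad}\Vert_{L^2}$, since it is exactly this constant that pins the threshold at $1/\pi^2$; everything else — legitimacy of the two energy identities for $L^\infty$ data (via the regularisation/density argument used for the existence theorem), the value of the Poincar\'e constant, and the choice of $\delta$ — is routine. The substantive content of this part of the paper, namely the construction of the modulated-enstrophy-plus-electric-energy functionals $\mathcal{E}$ and $\mathcal{F}$ and the proof of their monotonicity, is Theorem~\ref{stab} itself, which is taken as given here.
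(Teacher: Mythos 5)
Your argument follows the paper's route exactly: for $\mu^{good}$ you drop the non-negative electric energy from $\mathcal{E}$ and bound $\mathcal{E}(0)$ by the initial $L^2$ distance via the elliptic estimate, and for $\mu^{bad}$ you bound the electric energy at time $t$ by a Poincar\'e inequality and absorb it into the enstrophy. The first part ($\mu^{good}$) is complete and matches the paper.

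For $\mu^{bad}$, however, there is a genuine gap at precisely the step you set aside as "the only place where any real care is required": the comparison between $\Vert \rho^++\rho^--1\Vert_{L^2}^2$ and $\Vert\rho-\mu^{bad}\Vert_{L^2}^2$. Writing $h^\pm=\rho^\pm-\mu^{bad,\pm}$, you need $\Vert h^++h^-\Vert_{L^2}^2\le c\,(\Vert h^+\Vert_{L^2}^2+\Vert h^-\Vert_{L^2}^2)$, and the sharp general constant is $c=2$ (attained when $h^+=h^-$), since $\Vert h^++h^-\Vert^2=\Vert h^+\Vert^2+\Vert h^-\Vert^2+2\int h^+h^-$ and nothing forces $\int h^+h^-\le 0$ for an arbitrary perturbation. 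With $c=2$ the absorption condition becomes $\frac{T^+-T^-}{L}>\frac{2}{\pi^2}$, not the stated $\frac{1}{\pi^2}$; so your proof, as written, establishes the corollary only under a strictly stronger hypothesis, and the constant you call "routine" is exactly the one that is not. For comparison, the paper does not pass through $\Vert\rho^++\rho^--1\Vert_{L^2}$ at all: it asserts directly, "with the help of Fourier variables," the inequality $\int|\nabla V|^2\,dx\le \frac{L^2}{\pi^2}\Vert\rho-\mu^{bad}\Vert_{L^2}^2$ (with $\Vert\rho-\mu\Vert_{L^2}^2=\Vert h^+\Vert_{L^2}^2+\Vert h^-\Vert_{L^2}^2$), which amounts to the same comparison with constant $1$ and is therefore subject to the same objection unless some additional structure (e.g.\ a sign condition on $\int h^+h^-$) is invoked. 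To make your write-up correct you must either supply an argument for the constant $1$ or weaken the threshold to $\frac{2}{\pi^2}$.
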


\begin{proof}[Proof of the corollary] Thanks to the energy identity (\ref{E}) and to the Poisson equation:
\begin{eqnarray}
  \Vert \rho-\mu^{good} \Vert_{L^2}^2 &\leq& \Vert \rho-\mu^{good} \Vert_{L^2}^2 +  \frac{1}{L(T^+-T^-)}\int \vert \nabla V\vert ^2 dx  \nonumber\\
&\leq& \Vert \rho(0)-\mu^{good} \Vert_{L^2}^2 +  \frac{1}{L(T^+-T^-)}\int \vert \nabla V(0)\vert ^2 dx\nonumber\\
&\leq & \Vert \rho(0)-\mu^{good} \Vert_{L^2}^2 + C \frac{1}{L(T^+-T^-)}\Vert \rho(0)-\mu^{good} \Vert_{L^2}^2
\end{eqnarray}
This means that for any $\eta>0$ there exists $\delta>0$ such that if $\Vert \rho(0)-\mu^{good} \Vert_{L^2}\leq\delta$ then for any $t\geq 0$, $\Vert \rho-\mu^{good} \Vert_{L^2}\leq \eta$. In other words, $\mu^{good}$ is nonlinearly stable for the $L^2$ norm.

Around the bad-curvature steady state, we have shown that the following quantity is non-increasing:
\begin{equation}
  \mathcal{F}(t)=\Vert \rho-\mu^{bad} \Vert_{L^2}^2 - \frac{1}{L(T^+-T^-)}\int \vert \nabla V\vert ^2 dx
\end{equation}
We can easily prove with the help of Fourier variables the Poincaré-like inequality:
\begin{equation}
 \int \vert \nabla V\vert ^2 dx \leq \frac{ L^2 }{\pi^2}\Vert \rho-\mu^{bad}\Vert_{L^2}^2
\end{equation}
So we get:
\begin{eqnarray*}
  \Vert \rho-\mu^{bad}\Vert_{L^2}^2 &\leq& \mathcal{F}(0) + \frac{1}{L(T^+-T^-)}\int \vert \nabla V\vert ^2 dx \\
&\leq& \mathcal{F}(0) + \frac{1}{L(T^+-T^-)} \frac{L^2}{\pi^2}\Vert \rho-\mu^{bad} \Vert_{L^2}^2
\end{eqnarray*}
Hence,
\begin{equation}
 \left(1-\frac{L }{\pi^2 (T^+-T^- )}\right)\Vert \rho-\mu^{bad} \Vert_{L^2}^2 \leq \Vert \rho(0)-\mu^{bad} \Vert_{L^2}^2
\end{equation}

As a consequence, there is $L^2$ nonlinear stability in the bad curvature side, provided that 
\[
 \frac{T^+- T^-}{L} > \frac{1}{\pi^2}.
\]
Otherwise, we can not deduce anything.

\end{proof}

\begin{rque}
Note also that in the linear discussion, there was stability provided that
\begin{equation}
 \frac{T^+- T^-}{L}  \geq \frac{4}{5\pi^2}.
\end{equation}

We do not know what happens for  $ \frac{T^+- T^-}{L} \in [\frac{4}{5\pi^2},\frac{1}{\pi^2}]$ in the nonlinear case. Maybe one could expect to observe some bifurcation phenomenon.

\end{rque}

\begin{rque}
Actually the decrease of $\mathcal{E}(t)$ tells us a little more than just $L^2$ stability.
Indeed, there exists $C>0$ such that for any $\delta>0$, if $ \Vert \rho(0)-\mu^{good}\Vert_{L^2}^2 \leq \delta$, then for any $t>0$,
\[
\Vert \rho(t)-\mu^{good} \Vert_{L^2}^2 \leq \left(1+C\frac{1}{L(T^+-T^-)} \right)\delta.
\]
This means in particular that for large values of  $ T^+- T^-$, better confinement is obtained, which is qualitatively in agreement with experimental observations (\cite{Green}).
\end{rque}

We now prove Theorem \ref{stab}. We first give a technical lemma in  which will help for the proof.
\begin{lem}
\label{growth}
For $\mu= \mu^{good}$ or $\mu^{bad}$, we have for any $t>0$,
\begin{equation}
\label{identity}
\int E_2 \left(\rho^+-\mu^+\right)dx=-\int E_2 \left(\rho^--\mu^-\right)dx.
\end{equation}
\end{lem}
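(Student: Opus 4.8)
The plan is to reduce the claimed identity to the statement that a single integral vanishes, namely to show that
$\int E_2\,(\rho^+-\mu^+)\,dx + \int E_2\,(\rho^--\mu^-)\,dx = 0$, which is plainly equivalent to \eqref{identity}. The key preliminary remark is that for both $\mu=\mu^{good}$ and $\mu=\mu^{bad}$ one has $\mu^+ + \mu^- \equiv 1$; therefore $(\rho^+-\mu^+) + (\rho^--\mu^-) = \rho^+ + \rho^- - 1 = -\Delta V$ by the Poisson equation in \eqref{model}. Since moreover $E_2 = -\partial_{x_2}V$, the quantity to be shown to vanish is $\int E_2\,(-\Delta V)\,dx = \int \partial_{x_2}V\,\Delta V\,dx$.

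It then remains to prove $\int \partial_{x_2}V\,\Delta V\,dx = 0$. Splitting $\Delta V = \partial_{x_1}^2 V + \partial_{x_2}^2 V$, the contribution of the second term is $\tfrac{1}{2}\int \partial_{x_2}\big((\partial_{x_2}V)^2\big)\,dx$, which vanishes by periodicity in $x_2$. For the contribution of $\partial_{x_1}^2 V$ one integrates by parts in $x_1$: the boundary term at $x_1=0,L$ involves the factor $\partial_{x_2}V$, which vanishes there because the Dirichlet condition $V\equiv 0$ on $x_1=0,L$ forces the tangential derivative $\partial_{x_2}V$ to be zero on those segments; the remaining volume term is $-\tfrac{1}{2}\int \partial_{x_2}\big((\partial_{x_1}V)^2\big)\,dx$, which again vanishes by $x_2$-periodicity. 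Hence $\int \partial_{x_2}V\,\Delta V\,dx=0$ and \eqref{identity} follows.

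The argument is thus just a double integration by parts, and I do not anticipate a real obstacle. The only point that deserves a line of justification is the disappearance of the boundary terms at $x_1=0,L$: this rests on the perfect-conductor/Dirichlet condition $V=0$ (whence $\partial_{x_2}V=0$) on those edges, combined with $x_2$-periodicity to kill the full-derivative terms. For data $\rho_0\in L^\infty$ one has $V\in H^2$ by standard elliptic regularity, so all the integrations by parts are legitimate, and the identity holds for every $t>0$.
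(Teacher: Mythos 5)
Your proof is correct and follows essentially the same route as the paper's: both reduce the identity, via $\mu^++\mu^-\equiv 1$ and the Poisson equation, to showing $\int \partial_{x_2}V\,\Delta V\,dx=0$, and both establish this by integration by parts using the vanishing of $\partial_{x_2}V$ on $x_1=0,L$ (from the Dirichlet condition) together with $x_2$-periodicity. The only cosmetic difference is that you split $\Delta V$ into $\partial_{x_1}^2V+\partial_{x_2}^2V$ and treat each term, while the paper uses the vector identity $\partial_{x_2}V\,\Delta V=\operatorname{div}(\partial_{x_2}V\,\nabla V)-\partial_{x_2}\bigl(\tfrac{1}{2}\vert\nabla V\vert^2\bigr)$.
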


\begin{proof}
In order to prove this identity, one can simply compute:
\begin{eqnarray*}
 \int E_2 \left((\rho^--\mu^-) -(\rho^+-\mu^+) \right) dx &=&  \int E_2 \left(\rho^++\rho^--\mu^+-\mu^--2(\rho^+-\mu^+)\right)dx \\
&=& \int E_2 \left(\Delta V-2(\rho^+-\mu^+)\right)dx\\
&=& -2\int E_2 \left(\rho^+-\mu^+\right)dx
\end{eqnarray*}
Indeed, thanks to periodicity with respect to  $x_2$ and since $\partial_{x_2} V=0$ on $x_1=0,L$, we get: 
\begin{eqnarray*}
\int \partial_{x_2}V  \Delta Vdx &=& - \int \partial_{x_2} \nabla V.\nabla  Vdx  + \underbrace{\int \operatorname{div}(\partial_{x_2}V\nabla V )dx}_{=0}\\
&=& \int \partial_{x_2}\left( \frac{\vert\nabla V\vert^2}{2}\right)dx \, = \,0 
\end{eqnarray*}

If we make the same computation, by symmetry, we can also observe that:
 $$ \int E_2 \left((\rho^--\mu^-) -(\rho^+-\mu^+) \right) dx=2\int E_2 \left(\rho^--\mu^-\right)dx$$

\end{proof}

\begin{proof}[Proof of Theorem \ref{stab}]

We will only focus on the proof of the conservation of $\mathcal{E}(t)$, the proof being very similar for $\mathcal{F}(t)$. For the sake of readibility, we write $\mu$ instead of $\mu^{good}$ until the end of the proof.

We observe that the equation satisfied by $(\rho-\mu)$ reads in this case:
\begin{equation}
 \partial_t (\rho-\mu) - \begin{pmatrix} T^+ \partial_{x_2} (\rho^+-\mu^+) \\ T^-\partial_{x_2} (\rho^- -\mu^-)\end{pmatrix} + E^\perp.\nabla_x (\rho-\mu) = \begin{pmatrix}-\frac{E_2}{L} \\ \frac{E_2}{L} \end{pmatrix}
\end{equation}
and $E=-\nabla_x V$ with $-\Delta V = \rho^++\rho^--\mu^+-\mu^-$ and with Dirichlet conditions on the boundaries $x_1=0$ and $x_1=1$.

Taking the scalar product with $(\rho-\mu)$ in the transport equation and integrating with respect to $x$ entails:
\begin{equation}
 \frac{d}{dt} \Vert \rho-\mu \Vert_{L^2}^2 = \int -\frac{E_2}{L}(\rho^+-\mu^+) dx + \int \frac{E_2}{L}(\rho^--\mu^-)
\end{equation}
Indeed, thanks to the periodicity with respect to $x_2$, we first have:
\begin{eqnarray*}
 \int \partial_{x_2} (\rho^+-\mu^+) (\rho^+-\mu^+)dx&= &\int \frac{1}{2}\partial_{x_2} (\rho^+-\mu^+)^2 dx = 0
\end{eqnarray*}

Similarly we have:
\[
 \int \partial_{x_2} (\rho^--\mu^-) (\rho^--\mu^-)dx =0
\]

In the same fashion, with Green's Formula, and using $\operatorname{div} E^\perp = 0$, we have:
\begin{eqnarray*}
 \int E^\perp.\nabla_x (\rho-\mu) (\rho-\mu)dx&=&\frac{1}{2}\int E^\perp.\nabla_x (\rho-\mu)^2dx \\
&=&\frac{1}{2}\int \operatorname{div}(E^\perp (\rho-\mu)^2)dx \\
&=& \frac{1}{2}\left(\int_{x_1=0}E^\perp (\rho-\mu)^2.(-e_1)dx_2 + \int_{x_1=1}E^\perp (\rho-\mu)^2.e_1dx_2\right) \\
&=& 0
\end{eqnarray*}
since ${E_2}=-\partial_{x_2} V =0$ on $x_1=0,L$.

Now, by Lemma \ref{growth} we get:
\begin{eqnarray*}
 \frac{d}{dt} \Vert \rho-\mu \Vert_{L^2}^2 &=&\int -\frac{E_2}{L}(\rho^+-\mu^+) dx + \int \frac{E_2}{L}(\rho^--\mu^-)dx \\
&=& -2\int \frac{E_2}{L} \left(\rho^+-\mu^+\right)dx \left(=  2\int \frac{E_2}{L}(\rho^--\mu^-) \right)
\end{eqnarray*}
We have:
\begin{eqnarray*}
-\int{E_2}\left(\rho^+-\mu^+\right)dx&=& - \int V \partial_{x_2}\left(\rho^+-\mu^+\right)dx + \underbrace{\int \operatorname{div}(V\left(\rho^+-\mu^+\right)e_2)dx}_{=0}\\
&=& \frac{1}{T^+}\int -V\left(\partial_t(\rho^+-\mu^+) + E^\perp.\nabla_x(\rho^+-\mu^+)+\frac{E_2}{L}\right)dx\\
&=& \frac{1}{T^+}\int V\left(-\partial_t(\rho^++\rho^--\mu^+-\mu^-) + E^\perp.\nabla_x(\rho^++\rho^--\mu^+-\mu^-)\right) dx\\
&+&  \frac{1}{T^+} \int -T^- V \partial_{x_2} (\rho^- - \mu^-)dx \\
&=& \frac{1}{T^+}\int V\left(\partial_t\Delta V - E^\perp.\nabla_x\Delta V\right)dx + \frac{T^-}{T^+} \int {E_2}  (\rho^- - \mu^-)dx\\
\end{eqnarray*}
where we have plugged in the equation satisfied by $(\rho^+ - \mu^+)$, by $(\rho^- - \mu^-)$, and also plugged in the Poisson equation.

Finally we have:
\begin{eqnarray*}
\left(1- \frac{T^-}{T^+} \right)\frac{d}{dt} \Vert \rho-\mu \Vert_{L^2}^2 = \frac{2}{LT^+}\int V\left(\partial_t\Delta V - E^\perp.\nabla_x\Delta V\right)dx
\end{eqnarray*}

So that:
\[
\frac{d}{dt} \Vert \rho-\mu \Vert_{L^2}^2 = \frac{2}{L(T^+- T^-)}\int V\left(\partial_t\Delta V - E^\perp.\nabla_x\Delta V\right)dx
\]

To conclude the proof, we observe, using the Dirichlet boundary conditions and the periodicity:
\begin{equation}
\label{onlydir}
 \int V\partial_t\Delta Vdx = -\frac{d}{dt} \frac{1}{2}  \left(\int \vert \nabla V\vert ^2 dx \right)
\end{equation}
and
\begin{eqnarray*}
\int V E^\perp.\nabla \Delta V dx &=& \int V \operatorname{div}(E^\perp\Delta V) dx \\
&=&\underbrace{-\int \nabla V. E^\perp \Delta V dx}_{=0} + \underbrace{\int \operatorname{div} (V E^\perp \Delta V) dx}_{=0}
\end{eqnarray*}
The first term is equal to zero since $E.E^\perp=0$, the second one thanks to the boundary condition on $x_1=0,L$ and to the periodicity with respect to $x_2$.

As a result we have proved that
\[
\frac{d}{dt}\mathcal{E}(t)=0
\]

Actually the computations we have made are rigorously valid only for smooth solutions to (\ref{model}). Nevertheless, these can be justified by smoothing the initial data, and then passing to the weak limit, which entails (\ref{E}).
\end{proof}

\begin{rque}
\label{growth2}
We observe that we have also proved that:
\begin{equation}
\frac{d}{dt}\Vert \rho^+(t)-\mu^+ \Vert_{L^2}^2=\frac{d}{dt} \Vert \rho^-(t) -\mu^- \Vert_{L^2}^2
\end{equation}
which yields:
 \begin{equation}
 \Vert \rho^+(t)-\mu^+ \Vert_{L^2}^2 -  \Vert \rho^-(t) -\mu^- \Vert_{L^2}^2=\Vert \rho^+(0)-\mu^+ \Vert_{L^2}^2 -  \Vert \rho^-(0) -\mu^- \Vert_{L^2}^2.
 \end{equation}
\end{rque}
\begin{rque}

Let us add that the explicit form of the equilibria is crucial in the proof of the theorem. It would not work similarly if we had taken an equilibrium of the form:
\begin{equation}
 \mu(x_1)= \begin{pmatrix}\Phi(x_1) \\ 1-\Phi(x_1) \end{pmatrix} \nonumber
\end{equation}
with $\Phi$ a smooth function. In this case it should be maybe more relevant to use the general Lyapunov functionals method of Arnold \cite{Arn}. 

Likewise one can notice that the proof would have not worked if we had chosen any other boundary condition than Dirichlet. 

\end{rque}
\section{On nonlinear instability}
\label{instability}
What we intend to show now is a property of nonlinear instability in the ``bad curvature'' region when the physical parameters satisfy:
\[
 \frac{T^+- T^-}{L}  < \frac{4}{5 \pi^2}.
\]

 This can be interpreted as a bad confinement property. We first recall that the equilibrium in this case is the following:

\begin{equation}
 \mu^{bad}(x_1)= \begin{pmatrix}1-\frac{x_1}{L}\\ \frac{x_1}{L}\end{pmatrix} \nonumber
\end{equation}

Thanks to the existence of an eigenvalue with maximal positive real part for the linearized operator around $\mu^{bad}$, we can prove  a nonlinear instability result. 
Indeed, using the method introduced by Grenier \cite{Gre}, we are able to pass from the linear spectral instability to the nonlinear instability in the $L^2$ norm. Grenier's method was originally used to prove instability in the $L^2$ velocity norm for Euler ; we show here that this technique can also be adapted to show instability in the $L^2$ vorticity norm.

The drawback of this method is that it requires high regularity on an eigenfunction associated to the dominant eigenvalue, which could be difficult to check in more complicated cases. In our case, we were able to prove such a smoothness in Lemma \ref{reg}.

For the sake of readability we will write $\mu$ instead of $\mu^{bad}$ since there is no risk of confusion.

\begin{thm}
\label{insta}
There exist constants $\delta_0,\eta_1,\eta_2>0$ such that for any $0<\delta<\delta_0$ and any $s\geq0$ there exists a solution $(\rho,E)$ to (\ref{model}) with $\Vert \rho(0)- \mu \Vert_{H^s} \leq \delta$ but such that:
\begin{equation}
\label{instaR}
\Vert \rho(t_\delta)- \mu \Vert_{L^2} \geq \eta_1
\end{equation}
and:
\begin{equation}
\label{instaE}
\Vert E(t_\delta) \Vert_{L^2} \geq \eta_2
\end{equation}
with $t_\delta= O(\vert \log \delta \vert)$.

In particular, $\mu$ is unstable with respect to the $L^2$ norm.
\end{thm}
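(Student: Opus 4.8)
The plan is to follow Grenier's scheme for upgrading spectral instability to nonlinear instability, adapted to the ``vorticity'' ($L^2$ on the densities) setting. Write $\rho = \mu + w$, so that $w = (w^+,w^-)$ solves an equation of the form $\partial_t w = Mw + Q(w)$, where $M$ is the linearized operator of Corollary \ref{spectral} and $Q$ is the (bilinear) remainder coming from the transport term $E^\perp\cdot\nabla_x w$, with $E$ determined from $w^+ + w^-$ through the Poisson problem. By Corollary \ref{spectral}, $M$ has an eigenvalue $\lambda$ with $\operatorname{Re}\lambda = \kappa > 0$ maximal, and by Lemma \ref{reg} the corresponding eigenfunction $\rho_\ast$ is $C^\infty$, hence in every $H^s$. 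The first step is to build a high-order approximate solution: set $\rho^{app}_N(t) = \mu + \sum_{j=1}^{N} \delta^{j} \rho_j(t)$, where $\rho_1(t) = e^{t\kappa}\operatorname{Re}(e^{it\operatorname{Im}\lambda}\rho_\ast)$ is the growing mode and each $\rho_j$, $j\geq 2$, solves a linear inhomogeneous problem $\partial_t \rho_j = M\rho_j + (\text{polynomial in } \rho_1,\dots,\rho_{j-1})$; by the maximality of $\kappa$ and Corollary \ref{spectral}'s semigroup bound $\|e^{-tM}\|_{H^s\to H^s}\leq C(\gamma,s)e^{t\gamma}$ for any $\gamma>\kappa$, one gets $\|\rho_j(t)\|_{H^s} \lesssim e^{j\kappa t}$ (up to an arbitrarily small loss in the exponent), so that $\delta^j\rho_j(t)$ is controlled by $(\delta e^{\kappa t})^j$.

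The second step is the energy estimate for the true error. Take $\rho$ the solution of (\ref{model}) with $\rho(0) = \rho^{app}_N(0)$ (which satisfies $\|\rho(0)-\mu\|_{H^s}\leq C\delta$), and set $R = \rho - \rho^{app}_N$. Then $R$ satisfies an equation $\partial_t R = MR + (\text{bilinear terms in } R, \rho^{app}_N) + \delta^{N+1}(\text{bounded source})$. Performing an $L^2$ energy estimate on $R$ — exactly the kind of computation carried out in the proof of Theorem \ref{stab}, using periodicity in $x_2$, the Dirichlet condition $V=0$ on $x_1 = 0, L$, and $\operatorname{div} E^\perp = 0$, together with the log-Lipschitz/elliptic estimates recalled in the Cauchy theory — one obtains, as long as $\|R(t)\|_{L^2}$ and $\|\rho^{app}_N(t)-\mu\|$ stay small, a bound of the form $\frac{d}{dt}\|R\|_{L^2}^2 \lesssim \|R\|_{L^2}^2 \log(\cdots) + \delta^{2(N+1)} e^{2(N+1)\kappa t}$. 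A Gronwall-type argument, with $N$ chosen large enough, then gives $\|R(t)\|_{L^2} \ll \delta e^{\kappa t}$ on the time interval where $\delta e^{\kappa t}\leq \theta$ for a small fixed $\theta$. Defining the escape time $t_\delta$ by $\delta e^{\kappa t_\delta} = \theta$, so that $t_\delta = \frac{1}{\kappa}|\log\delta| + O(1) = O(|\log\delta|)$, we conclude that at $t = t_\delta$ the dominant term $\delta\rho_1(t_\delta)$ has $L^2$ norm comparable to $\theta\|\rho_\ast\|$ while all other contributions ($R$ and the higher $\rho_j$'s) are a small fraction of that; hence $\|\rho(t_\delta)-\mu\|_{L^2}\geq\eta_1$ for a suitable $\eta_1>0$, which is (\ref{instaR}).

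For the electric field bound (\ref{instaE}), one cannot simply invoke the Poisson estimate in the wrong direction, since a priori $\rho(t_\delta)-\mu$ large in $L^2$ does not force $\|E(t_\delta)\|_{L^2} = \|\nabla\Delta^{-1}(\rho^+ + \rho^- - 1)\|_{L^2}$ to be large. Here I would use the conserved functional $\mathcal{F}(t)$ from Theorem \ref{stab}: since $\mathcal{F}(t) = \|\rho(t)-\mu\|_{L^2}^2 - \frac{1}{L(T^+-T^-)}\|\nabla V(t)\|_{L^2}^2$ is non-increasing, we have $\|\nabla V(t_\delta)\|_{L^2}^2 \geq L(T^+-T^-)\left(\|\rho(t_\delta)-\mu\|_{L^2}^2 - \mathcal{F}(0)\right)$, and $\mathcal{F}(0) = O(\delta^2)$ is negligible compared to the order-one lower bound $\eta_1^2$ just obtained for $\|\rho(t_\delta)-\mu\|_{L^2}^2$; this yields $\|E(t_\delta)\|_{L^2}\geq\eta_2$ with $\eta_2>0$ independent of $\delta$. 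The same argument shows $\|E(t)\|_{L^2}$ grows essentially like $e^{\kappa t}$ until $t_\delta$, which is the analogue of Lin's exponential growth of velocity. The main obstacle I anticipate is the energy estimate for $R$: one must control the bilinear transport term $E_R^\perp\cdot\nabla_x(\cdot)$ where $E_R$ comes from $R$ itself, which is only log-Lipschitz, and this requires either working in a space slightly better than $L^2$ for the approximate solution (to absorb one derivative onto $\rho^{app}_N$, which is smooth) or a careful splitting so that the loss of derivative always falls on the smooth factor; making the Gronwall argument close with the log-Lipschitz modulus, while keeping the escape time at $O(|\log\delta|)$, is the delicate point. The regularity of the eigenfunction from Lemma \ref{reg} and the semigroup bound from Corollary \ref{spectral} are precisely what make this feasible.
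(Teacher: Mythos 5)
Your proposal follows essentially the same route as the paper: Grenier's high-order approximation built from the dominant growing mode of Corollary \ref{spectral} (smooth by Lemma \ref{reg}), an $L^2$ energy estimate plus Gronwall for the error up to the escape time defined by $\delta e^{\kappa t_\delta}=\theta$, and the monotone functional $\mathcal{F}$ of Theorem \ref{stab} to convert the order-one density lower bound into the electric-field lower bound. The log-Lipschitz difficulty you anticipate does not actually arise: in the $L^2$ estimate the self-interaction term $\int E_R^\perp\cdot\nabla_x R\, R\,dx$ vanishes by $\operatorname{div}E_R^\perp=0$ together with the periodicity and Dirichlet boundary conditions, and in the remaining bilinear terms the gradient always falls on the smooth factors $\rho^{app}_N$ and $\mu$, so only the elliptic bound $\Vert E_R\Vert_{L^2}\lesssim\Vert R\Vert_{L^2}$ is needed.
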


\begin{rque}
This instability result is complementary to the stability result proved in Corollary \ref{stabi}, since they involve the same $L^2$ norms.
\end{rque}
\begin{rque}
This instability result  can also be obtained by techniques similar to those used by Bardos, Guo and Strauss in \cite{BGS} for the 2D incompressible Euler system, which consist in proving that the solution $\rho$ remains "close" in some norm to a growing mode associated to the maximal growth rate of the linearized operator $M$. 

Using some bootstrap argument introduced by Bardos, Guo and Strauss \cite{BGS}, and a idea of Lin \cite{Lin04a} consisting in:
\begin{itemize}
\item estimating $\Vert E(t_\delta) \Vert_{L^2}$ with Duhamel's formula 
\item then studying the semi-group $e^{-tM}$ (where $M$ is the linearized operator) on $H^{-1}$,
\end{itemize}
we can also prove the exponential growth of the $L^2$ norm of the electric field.

We refer to the paper of Lin \cite{Lin05} which could be adapted to our case with some minor modifications.

Here we will provide an alternative proof  by using Grenier's method (which consists in proving that the solution $\rho$ remains "close" in some norm to the growing mode plus some high order correction) to prove (\ref{instaR}) and finally by using the energy of Theorem \ref{stab} to prove (\ref{instaE}).
\end{rque}

\begin{proof}

We begin with some preliminaries on the linearized operator. Using the same notations as in paragraph \ref{spec}, we consider $M$ the linearized operator around $\mu$ on 
$$Y=\{y_1,y_2 \in H^s([0,L]\times \mathbb{R} / \mathbb{Z})^2, \int (y_1+y_2)dx=0\},\quad \text{for}  \, s \geq 0.$$

For $s=0$, by Corollary \ref{spectral}, we know the existence of an eigenfunction $R$  associated to an eigenvalue $\lambda$ with maximal real part $\operatorname{Re} \lambda$. In addition, by Lemma \ref{reg}, $R$ belongs to any $H^s$, $s\geq 0$. For the sake of simplicity we will assume that $R$ is real and associated to the eigenvalue $\operatorname{Re} \lambda$. In the general case, since the linearized operator is real, the conjugate of $\lambda$ is also an eigenvalue so that one can consider by linearity real-valued growing modes and the following of the proof remains the same.

We recall also that by Corollary \ref{spectral}, for any $\gamma > \operatorname{Re} \lambda$, there is a constant $C(\gamma,s)$ such that for all $s\geq0$:
\begin{equation}
 \Vert e^{-tM}\Vert_{H^s\rightarrow H^s} \leq C(\gamma,s) e^{t \gamma}
\end{equation}

Basically the idea of Grenier is to construct a high order approximation of the nonlinear equation, that is a more precise approximation than the ``usual''  linearized equation.
Indeed instead of showing that $f-\mu$ is close to a well chosen eigenfunction, we show that it is close to the high order asymptotic expansion:
\begin{equation}
 \rho_{app}^{(N)}=\delta u_1 + \sum_{i=2}^N \delta^i u_i
\end{equation}
where $u_1= Re^{\operatorname{Re} \lambda t}$ . Note than for any $s>0$, we have:
$$\Vert u_1\Vert_{H^s} \leq Ce^{\operatorname{Re} \lambda t}.$$

The approximated density $\rho_{app}$ is constructed in order to have the following high order approximation:
\begin{equation}
 \partial_t \rho_{app} + M\rho_{app} + E^\perp_{app}.\nabla_x \rho_{app} =R_{app}
\end{equation}
where $E_{app}=\nabla \Delta^{-1} (\rho_{app}^1+\rho_{app}^2)$ and $R_{app}$ is a remainder satisfying the estimate:
$$\Vert R_{app} \Vert_{H^{L-2N-1}}\leq C \delta^{N+1} \exp{((N+1)\operatorname{Re} \lambda t)}.$$

Let $N\in \mathbb{N}^*$ to be chosen later and take any $S>0$ such that $S>2N+1$ .
We choose also $\theta<1$ (to be fixed later) such that $\frac{1}{2} \geq \frac{\theta}{1-\theta}$ and define $t_\delta$ such that $\theta= \delta \exp{(\operatorname{Re} \lambda t_\delta)}$. 

Now we can construct the $u_j = \begin{pmatrix} u_j^+ \\ u_j^- \end{pmatrix}$ by recursion; we will ensure that for all $1\leq j\leq N$, $\int (u_j^+ + u_j^-)dx=0$ and 
$$\Vert u_j \Vert_{H^{S-j}} \leq C \exp{(j\operatorname{Re} \lambda t)}.$$

Suppose we have $u_j$ for $j\leq k$. Then we define $u_{k+1}$ as the solution of the linear equation:
\begin{equation}
 \partial_t u_{k+1} + Mu_{k+1} + \sum_{j=1}^{k} E^\perp_j.\nabla_x u_{k+1-j} + E^\perp_{k+1-j}.\nabla_x u_{j}=0
\end{equation}
with $E_j=\nabla \Delta^{-1}(u_j^++u_j^-)$ and $u_{k+1}(0,x)=0$ as initial condition. Intuitively, $u_{k+1}$ is chosen in order to counterbalance the non-linear interaction between the previous terms of the expansion.

Thanks to Corollary \ref{spectral} with $\gamma \in ]\operatorname{Re} \lambda, 2\operatorname{Re} \lambda[$,  we get the following estimate:
\begin{eqnarray*}
 \Vert u_{k+1} \Vert_{H^{S-(k+1)}} &\leq& \int_0^t \Vert e^{M(t-s)}(\sum_{j=1}^{k} E^\perp_j.\nabla_x u_{k+1-j} + E^\perp_{k+1-j}.\nabla_x u_{j})\Vert_{H^{L-(k+1)}}ds \\
&\leq&   C \int_0^t  e^{\gamma(t-s)}(\sum_{j=1}^{k} \Vert E^\perp_j \Vert_{H^{S-(k+1)}} \Vert u_{k+1-j}\Vert_{H^{S-k}}+ \Vert E^\perp_{k+1-j}\Vert_{H^{S-(k+1)}}\Vert  u_{j}\Vert_{H^{S-k}})ds \\
&\leq&   C \int_0^t  e^{\gamma(t-s)}  \exp{((k+1)\operatorname{Re} \lambda s)}ds \\
&\leq& C \exp{((k+1)\operatorname{Re} \lambda t)}
\end{eqnarray*}

Note also that since $\frac{d}{dt} \int (u_{k+1}^+ + u_{k+1}^-)dx=0$, we clearly have 
$$\int (u_{k+1}^++u_{k+1}^-)  dx =0.$$

Now we can see that:
\begin{equation}
 \partial_t \rho_{app} + M\rho_{app} + E^\perp_{app}.\nabla_x \rho_{app} =R_{app}
\end{equation}
with $R_{app}=  \sum_{2N\geq j+j'>N} \delta^{j+j'}E^\perp_j.\nabla_x u_{j'}$.  Then, noticing that
for all $t\leq t_\delta$:
$$\delta \exp{(\operatorname{Re} \lambda t)}\leq \theta <1,$$
the following estimate follows:
\begin{equation}
 \Vert R_{app} \Vert_{H^{S- 2N-1}} \leq C_N \delta^{N+1} \exp{((N+1)\operatorname{Re} \lambda t)}
\end{equation}
where $C_N$ is a constant depending only on $N$.

Now we consider the solution $\rho $ to (\ref{model}) such that $\rho(0)-\mu=\rho_{app}(0)$; the equation satisfied by $w= \rho-\mu-\rho_{app}$ is the following:
\begin{equation}
 \partial_t w - \begin{pmatrix}T^+ \partial_{x_2} w^+ \\ T^- \partial_{x_2} w^-  \end{pmatrix} + E^\perp_w.\nabla_x w + E_{app}^\perp.\nabla_x w + E^\perp_w.\nabla_x \rho_{app}= -E^\perp_w.\nabla_x \mu - R_{app}
\end{equation}
with $E_w=\nabla \Delta^{-1}(w^+ - w^-)$.

Then we want to estimate $\Vert w \Vert_{L^2}$ by using some modulated energy inequality. To this end, we multiply by $w$ and integrate with respect to $x$:
\begin{eqnarray*}
 \frac{d}{dt} \Vert w\Vert_{L^2}^2 &\leq &  \int \vert E^\perp_w.\nabla_x \rho_{app} w \vert  dx + \int \vert E^\perp_w.\nabla_x \mu w \vert  dx + \Vert R_{app}\Vert_{L^2}\Vert w \Vert_{L^2} \\
&\leq & ( \Vert\nabla_x \rho_{app} \Vert_{L^\infty} +\Vert \nabla_x \mu \Vert_{L^\infty} )\Vert E^\perp_w \Vert_{L^2} \Vert w \Vert_{L^2} + \frac{1}{2}\Vert w \Vert_{L^2}^2+\frac{1}{2}\Vert R_{app}\Vert_{L^2}^2 \\
&\leq& C\left( (1+ \Vert\nabla_x \rho_{app} \Vert_{L^\infty} )\Vert w \Vert_{L^2}^2 + \Vert R_{app}\Vert_{L^2}^2\right) \\
&\leq& C\left( (1+ \Vert\nabla_x \rho_{app} \Vert_{L^\infty} )\Vert w \Vert_{L^2}^2 + C_N\delta^{2(N+1)} \exp{(2(N+1)\operatorname{Re} \lambda t)} \right)
\end{eqnarray*}

But for $\alpha>0$ such that $2+\alpha<S-N$ and for $t\leq t_\delta$, we can control the Lipschitz norm of $\rho_{app}$:
\begin{eqnarray*}
 \Vert\nabla_x \rho_{app} \Vert_{L^\infty} &\leq& \Vert \rho_{app} \Vert_{H^{2+\alpha}}  \\
& \leq & \sum_{i=1}^N \delta^i \Vert u_i \Vert_{H^{2+\alpha}} \leq \sum_{i=1}^N \delta^i \Vert u_i \Vert_{H^{S-i}}  \\
& \leq & \sum_{i=1}^N \delta^i \exp{(i \operatorname{Re} \lambda t)} \\
&\leq & \sum _{i=1}^N \theta^i \leq \frac{\theta}{1-\theta} \leq \frac{1}{2}
\end{eqnarray*}

Now choose $N$ such that:
\begin{equation}
 N+1> \frac{3C}{4\operatorname{Re}{\lambda}} 
\end{equation}

By Gronwall's lemma we consequently get:
\begin{equation}
 \Vert \rho-\mu-\rho_{app} \Vert_{L^2}= \Vert w\Vert_{L^2} \leq C_N \delta^{N+1} \exp{((N+1)\operatorname{Re} \lambda t)} \leq C_N \theta^{N+1}
\end{equation}

On the other hand we have a bound from below for the $L^2$ norm of $\rho_{app}$, for $t=t_\delta$: 
\begin{eqnarray*}
 \Vert \rho_{app} \Vert_{L^2} &\geq& \delta \Vert u_1 \Vert_{L^2} - \sum_{i=2}^N \delta^i \Vert u_i \Vert_{L^2} \\
&\geq& \delta \exp{(\operatorname{Re}\lambda t_\delta)} - \sum_{i=2}^N \delta^i \exp{(i\operatorname{Re}\lambda t_\delta)}\\
&=& \theta - \sum_{i=2}^N  \theta^i \\
&\geq& \frac{1}{2} \theta
\end{eqnarray*}

Finally we have, for $t=t_\delta$:
\begin{eqnarray*}
 \Vert \rho-\mu \Vert_{L^2} &\geq& \Vert \rho_{app} \Vert_{L^2} - \Vert \rho-\mu-\rho_{app} \Vert_{L^2} \\
&\geq& \frac{1}{2} \theta-C_N \delta^{N+1} \exp{((N+1)\operatorname{Re} \lambda t)} \geq \frac{1}{2}\theta - C_N \theta^{N+1} \\
&\geq& \frac{1}{4} \theta := \eta_1>0
\end{eqnarray*}
if $\theta$ is chosen small enough with respect to $N$. This proves the expected instability result (\ref{instaR}).

Now, in order to prove the exponential growth of the electric field, we use the conservation of the energy proved in Theorem \ref{stab}.
We have for any $t\geq0$:
\begin{equation}
\Vert \rho(t)-\mu \Vert_{L^2}^2 - \frac{1}{L(T^+-T^-)}\int \vert \nabla V(t)\vert ^2 dx \leq \Vert \rho(0)-\mu \Vert_{L^2}^2 - \frac{1}{L(T^+-T^-)}\int \vert \nabla V(0)\vert ^2 dx
\end{equation}
which implies that:
\begin{eqnarray*}
\eta_1^2 \leq \Vert \rho(t_\delta)-\mu \Vert_{L^2}^2& \leq&  \frac{1}{L(T^+-T^-)}\int \vert \nabla V(t_\delta)\vert ^2 dx + \Vert \rho(0)-\mu \Vert_{L^2}^2 \\
&\leq& \frac{1}{L(T^+-T^-)}\int \vert \nabla V(t_\delta)\vert ^2 dx +\delta^2
\end{eqnarray*}
We can consider that $\delta<\delta_0<\eta_1/2$, so that:
\begin{equation}
\int \vert \nabla V(t_\delta)\vert ^2 dx \geq L(T^+-T^-) (\eta_1^2 - \delta_0^2):=\eta_2^2
\end{equation}
This proves (\ref{instaE}).
\end{proof}

By Remark \ref{growth2} we can be a little more precise on the growth of the densities.

\begin{rque}
With the same notations as in the previous theorem, there exists $\eta_3>0$ such that:
\begin{equation}
\Vert \rho^+(t_\delta)- \mu^+ \Vert_{L^2} \geq \eta_3
\end{equation}
and
\begin{equation}
\Vert \rho^-(t_\delta)- \mu^- \Vert_{L^2} \geq \eta_3
\end{equation}
which means that both the hot and the cold plasma are unstable.

\begin{proof}
According to Remark \ref{growth2}, we have
\begin{equation}
\Vert \rho^-(t_\delta)- \mu^- \Vert_{L^2}-\Vert \rho^+(t_\delta)- \mu^+ \Vert_{L^2} =\Vert \rho^-(0)- \mu^- \Vert_{L^2}-\Vert \rho^+(0)- \mu^+ \Vert_{L^2}
\end{equation}
which implies that 
\[
\left\vert \Vert \rho^-(t_\delta)- \mu^- \Vert_{L^2}-\Vert \rho^+(t_\delta)- \mu^+ \Vert_{L^2} \right\vert \leq \delta_0
\]

By Theorem \ref{insta}, we have
\[
\Vert \rho^-(t_\delta)- \mu^- \Vert_{L^2}+\Vert \rho^+(t_\delta)- \mu^+ \Vert_{L^2} \geq \eta_1
\]

Assuming as in the previous proof that $\delta_0 \leq \epsilon_1 /2$, this clearly implies that
\[
\Vert \rho^-(t_\delta)- \mu^- \Vert_{L^2}, \Vert \rho^+(t_\delta)- \mu^+ \Vert_{L^2} \geq \eta_1/4:=\eta_3,
\]
which proves our claim.
\end{proof}
\end{rque}
\section{Conclusion}
\label{conclusion}
We have finally managed to provide a mathematical explanation of stability in the ``good curvature'' region and instability in the ``bad curvature'' region with our simplified nonlinear model. 
In our analysis we have pointed out that large temperature gradients brought nonlinear stability even in the bad curvature region. In other terms, if there is enough heating, there is good confinement: this is the H-mode.

A first natural extension to this work would be to generalize the stability/instability result to the kinetic model (\ref{pre}):
\begin{equation*}
  \left\{
 \begin{array}{ll}
\partial_t f -\frac{1}{2}\vert v\vert^2\partial_{x_{2}}f + E^{\perp}.\nabla_{x} f = 0 \\
  E= -\nabla_{x} V \\
    \displaystyle{-\Delta_{x} V = \int f dv-1}
 \end{array}
\right.
  \end{equation*}
 This shall be the object of a future work. Another important issue is to understand the influence of the gyroaverage operator, that we have neglected in this work.
 
In "real" tokamaks, the next step towards confinement consists in  considering a magnetic field with a variable direction, i.e. $B=B_0 e_\varphi + B_1 e_\theta$. At leading order, particles still follow the magnetic field lines: consequently, with such a twisting field, particles from the ``bad curvature'' region travel every now and then to the ``good curvature'' region. We accordingly expect overall confinement for the plasma.

\begin{center}
 \psfrag{a}{Twisting $B$}
\includegraphics[scale=0.4]{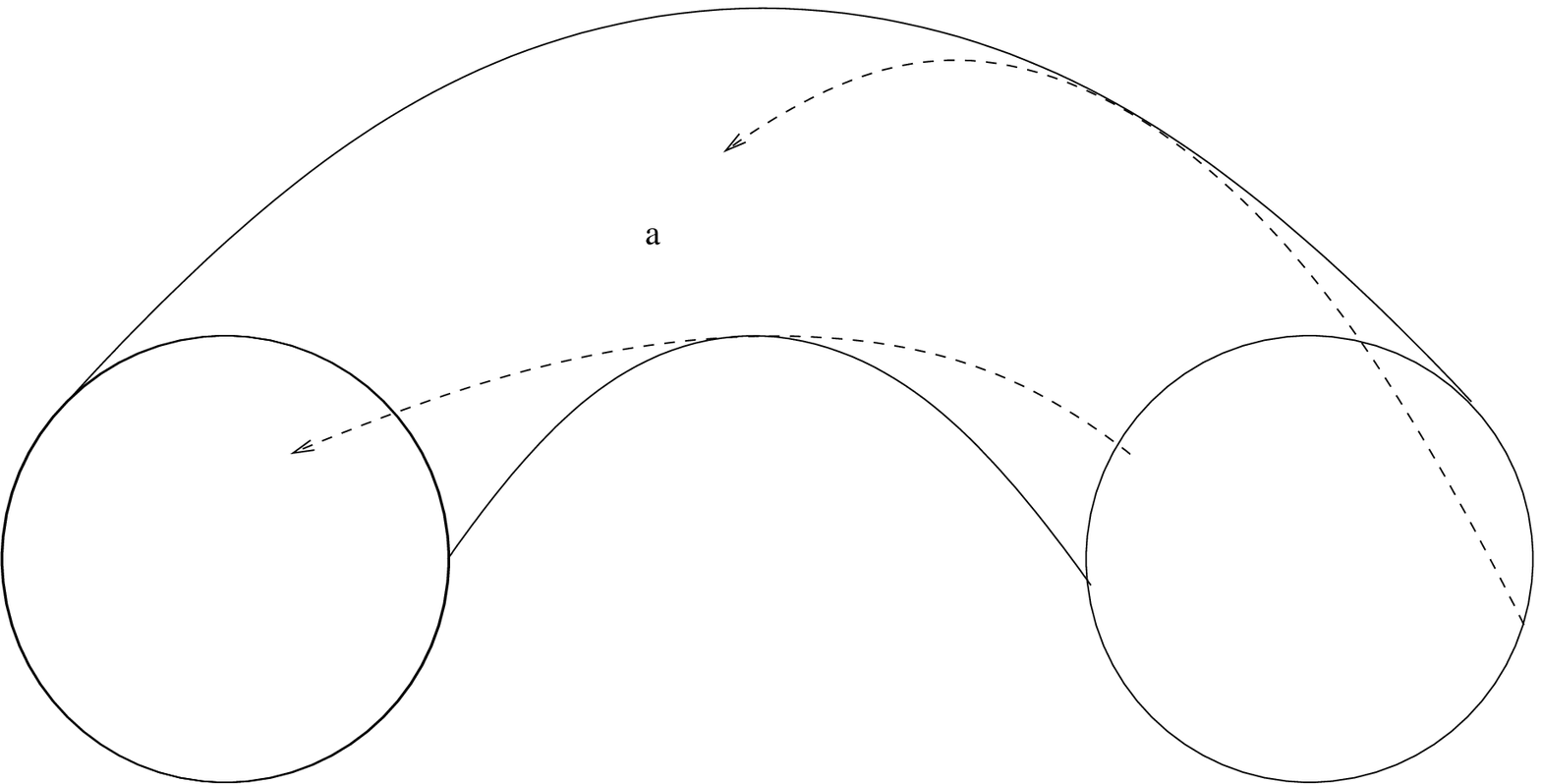}
\end{center}

A very challenging and interesting problem would be to prove overall confinement with such a twisting magnetic field. In this case, the accurate parameter to consider is the so-called safety factor, which stands for the number of times the magnetic field lines twist around the torus the long way for each time they twist around the short way. In ``real'' tokamaks, it has to be chosen with precaution in order to get good confinement properties (see \cite{Wes}).
 But it seems to be much more complicated, since one has to deal with many drifts due to the geometry of the magnetic field. 

\paragraph*{}
Finally let us conclude by mentioning that the analysis of confinement provided in this paper is very naive since it is now well known that there is a loss of confinement in tokamak plasmas, referred to as anomalous transport. Many models have been proposed and intensively studied to justify these phenomena: it would be very interesting to study some of them from the mathematical point of view. 

\bibliographystyle{amsplain}
\bibliography{confinement}

\end{document}